\theoremstyle{definition} 
\newtheorem{definition}{Definition}
\theoremstyle{plain} 
\newtheorem{theorem}{Theorem}[section]
\newtheorem{proposition}[theorem]{Proposition}
\newtheorem{lemma}[theorem]{Lemma}
\newtheorem{corollary}[theorem]{Corollary}
\newtheorem{conjecture}{Conjecture}
\theoremstyle{remark} 
\newtheorem{remark}{Remark}
\DeclareMathOperator{\Spec}{Spec}
\DeclareMathOperator{\Hom}{Hom}
\DeclareMathOperator{\End}{End}
\DeclareMathOperator{\Frob}{Frob}
\DeclareMathOperator{\Hilb}{\cH}
\DeclareMathOperator{\Spf}{Spf}
\DeclareMathOperator{\Split}{Split}
\newcommand{\ZZ}{\mathbb{Z}}
\newcommand{\F}{\mathbb{F}}
\newcommand{\Fpbar}{\overline{\mathbb{F}}_p}
\newcommand{\Z}{\mathbb{Z}}
\newcommand{\cO}{\mathcal{O}}
\newcommand{\I}{\mathscr{I}}
\newcommand{\IF}{\widetilde{\mathscr{I}}}
\newcommand{\cS}{\mathcal{S}}
\newcommand{\pdiv}{\mathcal{G}}
\newcommand{\abvar}{\mathcal{A}}
\title{{Just-likely intersections on Hilbert modular surfaces}} 
\author{{Asvin G.\footnote{email: gasvinseeker94@gmail.com}, Qiao He, Ananth N. Shankar\footnote{All authors affiliated with the University of Wisconsin-Madison.}}}
\date{} 
\begin{document}





\maketitle 
 
\setcounter{tocdepth}{2} 


\begin{abstract}
In this paper, we prove an intersection-theoretic result pertaining to curves in certain Hilbert modular surfaces in positive characteristic $p$. Specifically, let $C,D$ be two proper curves inside a mod $p$ Hilbert modular surface associated to a real quadratic field split at $p$. Suppose that the curves are generically ordinary, and that at least one of them is ample. Then, the set of points in $(x,y) \in C\times D$ with abelian surfaces parameterized by $x$ and $y$ isogenous to each other is Zariski dense in $C\times D$, thereby proving a case of a just-likely intersection conjecture. We also compute the change in Faltings height under appropriate $p$-power isogenies of abelian surfaces with real multiplication over characteristic $p$ global fields. 
\end{abstract}





\newcommand{\cH}{\mathcal{H}}

\section{Introduction}

The study of intersection theory in the context of Shimura varieties has produced several breakthroughs in arithmetic. For instance, Gross-Zagier \cite{GZ} prove their celebrated theorem on ranks of elliptic curves by extensively studying the arithmetic intersection theory of Heegner points on modular curves. This program has been generalized in various
directions. For example, it has been extended to Shimura curves (\cite{KRY}, \cite{YZZ}), higher dimensional unitary Shimura varieties (\cite{LL2020}), and function field analogue of Shimura curves (\cite{YZ}). Arithmetic intersection theory has been crucial to the development of this program. 

In a different direction, the proof of the Andr\'e-Oort conjecture heavily relies on the Average Colmez conjecture (interesting in its own right), whose proof (\cite{AGHMP}, \cite{YuanZhang}) relies on the Arakelov intersection theory of special cycles in orthogonal Shimura varieties. 

Most relevant to our paper is the work of Charles \cite{Ch} proving that two elliptic curves over a number field are isogenous modulo infinitely many primes. This is achieved by studying the arithmetic intersection theory of the modular curve by an approach inspired by work of Chai-Oort, who prove the analogous result over function fields \cite[Proposition 7.3]{CO06}. This work has been generalized to the splitting of abelian surfaces over global fields \cite{ST}, \cite{SSTT}, \cite{MST}, (where the authors prove that abelian surfaces over global fields are isogenous to a product of elliptic curves modulo infinitely many primes) and to Picard-rank jumping results \cite{SSTT} \cite{MST1}. These theorems are obtained by proving that any fixed arithmetic curve intersects the Hecke translates of an appropriate ``special divisor'' at infinitely many points. These special divisors solve moduli problems, and parameterize abelian varieties with extra endomorphisms (the interested reader may look at \cite{AGHMP} or \cite[Section 2.3-2.5]{SSTT} for precise definitions). This moduli-theoretic interpretation of special divisors is crucial to understanding and computing local and global intersections.

Despite the proofs relying heavily on this moduli theory of special divisors, these problems can be naturally phrased in the setting of arbitrary subvarieties of a Shimura variety of Hodge type. Specifically, let $(G,X)$ denote a Shimura datum of Hodge type with reflex field $E$. Let $S/E$ denote the Shimura variety of Hodge type associated to $(G,X)$. We also work with a fixed Hodge embedding of $S$ into the moduli space of polarized abelian varieties. We assume the level structure $\mathsf{K} \subset G(\mathbb{A}^f)$ is neat, so that $S$ carries a universal abelian scheme.  Let $\cS/\cO_E[1/N]$ denote the integral canonical model of $S$ as constructed by Kisin in \cite{Kisinintegral} where $N$ is a sufficiently large integer. We refer the interested reader to \emph{loc. cit.} for the definition of integral canonical models and for a precise description of $N$ in terms of $G$ and the level structure $\mathsf{K}$. Let $C,D\subset \cS$ be subschemes having complementary dimension. How does the set of points on $C$ isogenous\footnote{The universal abelian scheme on $\cS$ defines for us the notion of isogeny.} to some point of $D$ distribute in $C$? In a recent AIM workshop, the participants developed a framework that makes the following prediction.

\begin{conjecture}\label{AIMconjecture}
Let $\cS$ be as above, and let $\cS_{\F_q}$ denote the fiber of $\cS$ at some prime $\mathfrak{p} \subset \cO_E[1/N]$. Suppose that $C,D\subset \cS_{\F_q}$ are generically ordinary subvarieties having complementary dimension. Then, the set of points in $C$ isogenous to some point of $D$ is Zariski-dense in $C$. Further, the subset of $C\times D$ consisting of pairs of isogenous points is dense in $C\times D$.
\end{conjecture}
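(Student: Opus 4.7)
The plan is to adapt the Chai--Oort/Charles strategy to the Hilbert modular surface setting. Write $\cS$ for the Hilbert modular surface over $\overline{\F}_q$ parameterizing abelian surfaces with RM by $\cO_F$ for a real quadratic field $F$, and for each ideal $\mathfrak{n}\subset\cO_F$ let $Z_\mathfrak{n}\subset\cS\times\cS$ be the Hecke correspondence whose points parameterize pairs of RM abelian surfaces linked by an RM-compatible isogeny with kernel of type $\mathfrak{n}$. The conjecture for $(\cS,C,D)$ reduces to showing that $(C\times D)\cap\bigcup_\mathfrak{n}Z_\mathfrak{n}$ is Zariski-dense in $C\times D$. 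Since $C$ and $D$ are curves inside the surface $\cS$, both $C\times D$ and each $Z_\mathfrak{n}$ are two-dimensional inside the four-dimensional ambient space, so their expected intersection is zero-dimensional and intersection theory is the right tool.

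First I would fix a rational prime $p$ whose behaviour in $\cO_F$ (split or inert) is compatible with the generic Newton polygon of points of $C$ and $D$, and restrict attention to the sequence $Z_n := Z_{\mathfrak{p}^n}$ as $n\to\infty$. The global geometric intersection number $\bigl((C\times D)\cdot Z_n\bigr)$ should be readable from the cohomology class of $Z_n$, which is essentially a Hecke operator acting on the N\'eron--Severi group of $\cS$; standard estimates on degrees of Hecke correspondences should produce a lower bound growing at least like a positive power of $p^n$. Granted such a lower bound, Zariski-density of the generically ordinary intersections reduces to showing that the intersection multiplicities concentrated at the finitely many non-ordinary points of $C$ and $D$ grow strictly more slowly than this global rate.

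This local bound is where the Faltings height computation advertised in the abstract would enter. Near a non-ordinary intersection point, I would lift both abelian surfaces to characteristic zero, compute the change in Faltings height under the relevant RM-compatible $\mathfrak{p}^n$-isogeny, and translate this into an upper bound on the local contribution via an Arakelov-style identity matching arithmetic degrees with local intersection numbers. Because the change in Faltings height is controlled by the Newton slopes on the two $p$-divisible groups, at a non-ordinary point it is forced to grow only linearly in $n$, yielding the polynomial upper bound on multiplicities needed to beat the exponential global growth.

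The main obstacle I anticipate is the local analysis at the non-ordinary locus, which on a Hilbert modular surface is stratified more intricately than on a modular or Shimura curve. In the inert case the supersingular locus is already a divisor that meets $C$ and $D$, and in the split case one must separately handle the ``$f$-ordinary'' or product-Newton-polygon stratum where one $\cO_F$-summand is ordinary and the other is not. Getting uniform multiplicity bounds across these strata, and making the Faltings height computation sharp enough to distinguish partial non-ordinarity from full supersingularity, looks like the technical heart of the argument; everything else is a fairly standard global-versus-local balancing act.
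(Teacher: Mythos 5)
Your proposal matches the paper's broad Chai--Oort style balance of local versus global intersection numbers, but both halves of your balancing act diverge from what the paper actually does, and each divergence hides a genuine gap.

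On the \emph{global} side, you propose to read the growth of $((C\times D)\cdot Z_n)$ off the cohomology class of the Hecke correspondence. The paper explicitly says this route is blocked: unlike the special-divisor setting (Borcherds modularity, etc.), there are no modularity results for the cycle classes of the $p$-power Hecke correspondences on a Hilbert modular surface, and the correspondences $\pi_i^n$ are not even \'etale. What the paper does instead is intersect $C_n := (\pi_1^n)^{-1}(C)$ with the Hasse (non-ordinary) locus $Z = Z_1\cup Z_2$ using the explicit product Serre--Tate coordinates, getting the exact asymptotic $C_n\cdot Z \asymp q^n$ (Theorem \ref{thm:nonordintersec}), and then leverages the ampleness of $D$ via the elementary observation that $mD - Z$ is ample for $m\gg 0$, giving $m(C_n\cdot D) \geq C_n\cdot Z$. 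Ampleness is the hypothesis doing the work, not a degree computation for Hecke operators.

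On the \emph{local} side, the gap is more serious. You want to lift to characteristic zero and use an Arakelov-style identity to bound the multiplicity at a non-ordinary point by a Faltings-height calculation. This has the roles reversed: in the paper, the Faltings height computation (Theorem \ref{thm: change of heights formula}) is a consequence of the global non-ordinary intersection count, not a tool for local multiplicities, and the height grows \emph{exponentially} in $n$, not linearly as you claim. Moreover, the local bound you actually need is at \emph{every} point of $D$, not just the non-ordinary ones -- one must rule out a single ordinary point $x\in D$ absorbing all of the growing global intersection. The key idea you are missing is the \emph{partial Frobenius structure}: because $p$ splits in $\cO_F$, the Serre--Tate deformation space at each point is a product $\Spf\Fpbar[\![t_1]\!]\times\Spf\Fpbar[\![t_2]\!]$ (Lemma \ref{lem:productdeformation}), and the Hecke operator $\pi_1$ acts as Frobenius purely in the $t_1$ coordinate while fixing $t_2$ (Proposition \ref{prop: local co-ords}). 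Combined with the transversality statement that a generically ordinary curve with no extra endomorphisms cannot contain either coordinate axis $t_i=0$ in its formal neighborhood (Theorem \ref{lem: transversality}, whose proof needs the connected--\'etale splitting lemma and the $p$-adic Tate conjecture), this yields the bounded local intersection numbers of Theorem \ref{thm: local intersection, moving curve}(1) by a direct power-series valuation argument, entirely in characteristic $p$. Lifting to characteristic zero discards exactly the inseparable $p$-power isogenies ($\pi_1$, $\pi_2$, Frobenius, Verschiebung) that make the whole argument run, so that step as written would not go through.

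Finally, even granting both halves, you have only shown Zariski-density of the isogenous pairs in $C$, not in $C\times D$; the paper handles the second assertion by a separate Galois-orbit argument (Theorem \ref{thm: main density}) that your sketch doesn't touch.
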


This conjecture is inspired both by the work in \cite{Ch, CO06, ST, SSTT,MST} as well as the Hecke orbit conjecture. There is also an arithmetic (i.e., number-field) analogue of this conjecture. These conjectures have several applications. For instance, they would imply that any abelian fourfold over a global field should be isogenous to a Jacobian modulo infinitely many primes! This is all the more striking as the existence of abelian fourfolds over global fields \emph{not} isogenous to Jacobians has been established in \cite{AnanthJacob} and \cite{JacobJacobian}. The characteristic-zero analogue of Conjecture \ref{AIMconjecture} has been proven (\cite[Theorem 1.22]{TT}). 

In \cite{Asvin}, the first-named author proves this conjecture in the setting of $X(1)^n$. In this paper, we establish this conjecture for Hilbert modular surfaces (with some mild constraints).
\begin{theorem}\label{intromain}
Let $F$ denote a real quadratic field, and let $p$ denote a rational prime that splits in $F$. Let $\cH$ denote the mod $p$ Hilbert modular surface associated to $\cO_F$, and let $C,D\subset \cH$ denote two generically ordinary, proper curves in $\cH$, at least one of which is ample. Then Conjecture \ref{AIMconjecture} holds for $C,D \subset \cH$. \end{theorem}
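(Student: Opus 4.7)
The plan is to adapt the characteristic-$p$ intersection-theoretic strategy of Charles \cite{Ch}, Shankar--Tang \cite{ST}, and SSTT/MST \cite{SSTT,MST} to the Hilbert modular setting. Since $p$ splits in $F$ as $\mathfrak{p}_1 \mathfrak{p}_2$, on the ordinary locus of $\cH$ the $p$-divisible group of the universal RM abelian surface decomposes into two height-one pieces under the $\cO_F \otimes \Z_p \cong \Z_p \times \Z_p$ action. Two ordinary points of $\cH(\Fpbar)$ are isogenous (as RM abelian surfaces) if and only if one lies on a Hecke translate of the other under the correspondences $T_{\mathfrak{p}_1^a \mathfrak{p}_2^b}$ (together with the prime-to-$p$ Hecke operators). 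Thus the task reduces to producing a Zariski-dense set of points in $C$ lying on $\bigcup_{a,b} T_{\mathfrak{p}_1^a \mathfrak{p}_2^b}(D)$, and similarly in $C \times D$.

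My next step would be to fix a smooth toroidal compactification $\overline{\cH}$, view $C$ and $D$ as divisors on it, and compute the global intersection numbers
\[ N_{a,b} \;:=\; \overline{C} \,\cdot\, T_{\mathfrak{p}_1^a \mathfrak{p}_2^b}(\overline{D}). \]
Using the ampleness hypothesis on (say) $C$ together with the projection formula and standard estimates for the cohomological degree of $T_{\mathfrak{p}_1^a \mathfrak{p}_2^b}$, one expects a lower bound of the shape $N_{a,b} \gtrsim p^{a+b}$, up to a positive constant involving the Hodge-bundle degrees of $C$ and $D$. Each contributing intersection point (away from the boundary and the supersingular locus) gives a point of $C$ isogenous to a point of $D$, so the total number of such pairs is infinite. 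To upgrade ``infinite'' to Zariski dense in $C$, I would argue by contradiction: if all such points lay in a proper closed $Z \subsetneq C$ for infinitely many $(a,b)$, the local intersection multiplicities of $T_{\mathfrak{p}_1^a \mathfrak{p}_2^b}(\overline{D})$ with $\overline{C}$ along $Z \cup \partial\overline{\cH} \cup \cH^{\mathrm{ss}}$ would have to absorb the entire global growth, which I aim to rule out. The $C\times D$ density follows by viewing $T_{\mathfrak{p}_1^a \mathfrak{p}_2^b}$ as a cycle in $\cH \times \cH$ and intersecting with $C \times D$.

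The main obstacle, and the place where the Faltings height computation of the abstract is designed to enter, is bounding the supersingular and boundary contributions to $N_{a,b}$. At an ordinary point, the Serre--Tate coordinates adapted to $p\cO_F = \mathfrak{p}_1 \mathfrak{p}_2$ present $T_{\mathfrak{p}_1^a \mathfrak{p}_2^b}$ as a graph of $(u,v) \mapsto (u^{p^a}, v^{p^b})$ on a formal torus $\widehat{\mathbb{G}}_m^2$, making local multiplicities along a generically ordinary curve explicitly bounded in terms of its tangential relation to the two Goren--Oort foliations. The harder input is at the finitely many supersingular points, where a Dieudonn\'e-module / quasi-canonical-lift analysis is required; here the change-of-Faltings-height formula under $\mathfrak{p}$-power isogenies, applied over the function field of $C$, yields an arithmetic height inequality that forces the supersingular contribution to be $o(p^{a+b})$. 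Combined with the analogous (easier) boundary estimate on the toroidal compactification, this leaves the generic ordinary intersection points to dominate and ultimately guarantees the claimed Zariski density.
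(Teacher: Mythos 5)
Your high-level strategy — intersect $C$ against $p$-power Hecke translates of $D$, bound local multiplicities, and force the global intersection to grow — is the same as the paper's. But several of your implementation steps are either gaps or detours that the paper avoids, and one key mechanism is missing.

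The most serious gap: you assert that Serre--Tate coordinates make the local multiplicity "explicitly bounded in terms of its tangential relation to the two Goren--Oort foliations," but you never address \emph{why} $C$ and $D$ avoid tangency to those foliations. In the paper this is the content of Theorem \ref{lem: transversality}: a generically ordinary curve with no extra generic endomorphisms cannot have a formal branch contained in a leaf ($t_i = 0$), proved via the splitting of the connected--\'etale sequence and the $p$-adic Tate conjecture. Without this input, the local boundedness claim in Theorem \ref{thm: local intersection, moving curve} simply does not hold — if $t_2 \mid g$ the local intersection $(C_n\cdot D)_x$ grows like $p^n$. You also need to handle the extra-endomorphism case separately (the paper punts to \cite{MST}). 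This is the single most important ingredient you have not supplied.

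Second, your supersingular analysis is both unnecessary and inverted. The coordinates of Lemma \ref{lem:productdeformation} come from the $\pdiv=\pdiv_1\times\pdiv_2$ deformation splitting, which works at \emph{every} point including supersingular ones; there is no need for a separate Dieudonn\'e / quasi-canonical-lift analysis, and Theorem \ref{thm: local intersection, moving curve} bounds local multiplicities uniformly. Relatedly, you propose to use the change-of-Faltings-height formula as an \emph{input} to bound supersingular contributions, but in the paper it is the reverse: the exact intersection $(\pi_1^{-n}(C)\cdot Z)=q^n(C.Z_2)+(C.Z_1)$ (Theorem \ref{thm:nonordintersec}) is computed first from the partial Frobenius structure, and the height formula (Theorem \ref{thm: change of heights formula}) is a consequence of it via $Z \sim \omega^{p-1}$. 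Third, the toroidal compactification and boundary estimate are unnecessary given the hypothesis that $C,D$ are proper curves lying inside $\cH$; the pullbacks $(\pi_1^n)^{-1}(C)$ stay in $\cH$. Finally, your route to the global lower bound ("standard estimates for cohomological degree of $T_{\mathfrak p_1^a\mathfrak p_2^b}$") is too vague and glosses over the fact that these operators are purely inseparable in characteristic $p$; the paper instead computes $(C_n\cdot Z)$ exactly and then leverages ampleness of $D$ via $mD - Z$ ample. Your argument for the density in $C\times D$ is also sketchier than the paper's Frobenius-orbit argument (Theorem \ref{thm: main density}), which explicitly rules out the intersection points concentrating on finitely many fibers.
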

We expect this result to hold for curves that are not necessarily proper or ample. We are hopeful that an appropriate prime-to-$p$ Hecke translate of a fixed curve in $\cH$ should be ample (but to our knowledge, this is not yet known). Such a result would immediately reduce the non-ample case to the ample case.

\subsection{Outline of proof}
Our proof is inspired by Chai-Oort's work. Roughly speaking, we consider a sequence of Hecke operators $T_n$, and consider the intersections $(T_n(C). D)$. In order to prove our theorem, it suffices to prove that the local contributions of any finite set of points $x_1 \hdots x_m \in D$ to the intersection $(T_n(C).D)$ is smaller than the global intersection number $(T_n(C).D)$ for some $n$. However, we encounter several additional difficulties that aren't present in their work.

Firstly, their ambient space is a product of curves, and their proof makes crucial use of the product structure. In the hitherto solved cases where the ambient space is \emph{not} a product variety, either $C$ or $D$ is special and the local intersection numbers at any fixed point is bounded using the moduli interpretation of the special divisor (see \cite[Lemma 7.2]{SSTT}, and \cite[Lemma 7.2.1]{MST}). We overcome this first difficulty by constructing a \emph{local} product structure at every closed point (see Proposition \ref{prop: local co-ords}) which is adapted to a very specific set of Hecke operators that exist only in positive characteristic. In fact, these Hecke operators endow $\cH$ with a partial Frobenius structure (see \ref{defn: r(x)}). This construction allows us to control the local intersection multiplicites (see Theorem \ref{thm: local intersection, moving curve}). 

The second difficulty is in estimating the global intersection number. The previously known cases (in the setting of special divisors) use earlier results establishing the modularity of sequences of special divisors (\cite{Bor99}) to compute the global intersection numbers. In our setting, we have no such modularity results and we instead compute the intersections of $T_n(C)$ with the non-ordinary locus of $\cH$, and then use the ampleness of $D$ to control $(T_n(C).D)$. While doing so, we establish a result (Theorem \ref{thm: change of heights formula}) pertaining to the change of Faltings height under $p$-power isogeny which is of independent interest. 

\subsection{Change of Faltings height under $p$-power isogeny}

The Faltings height of an abelian variety over a number field is defined to be the Arakelov degree of the Hodge bundle. The change of the Faltings height of an Abelian variety under isogeny was a crucial ingredient in the proof of the Mordell Conjecture by Faltings. 

In the function field case, the definition of Faltings height for a proper curve is even simpler. Suppose $\abvar/C$ is a family of abelian varieties over a proper curve $C$. Then we define the Faltings height of $\abvar/C$ to just be the degree of the Hodge bundle. In our setting, this height only depends on the image of $C$ in $\cH$. The prime-to-$p$ Hecke operators on $\cH$ are \'etale, and this makes it easy to compute the change in Faltings height under prime-to-$p$ isogeny. The non-\'etaleness of $p$-power Hecke operators makes computing the change-in-height a formidable prospect. Further, $p$-power Hecke correspondences aren't even separable! 

Indeed, the only prior work pertaining to this that we are aware of is \cite{griffon2020isogenies}. They consider an isogeny $\varphi: E_1 \to E_2$ between two elliptic curves over $k(C)$ for $C$ a curve over $\mathbb F_q$ and \cite[Theorem A]{griffon2020isogenies} proves that
\[ h(j(E_2)) = \frac{\deg_{\mathrm{ins}}(\varphi)}{\deg_{\mathrm{ins}}(\hat\varphi)}h(j(E_1))\]
where $h$ denotes the Weil height on $X(1)$ and $\deg_{\mathrm{ins}}(\varphi)$ denotes the inseparable degree of $\varphi$. The proof there proceeds by an identification of the Weil height with an intersection number: since the Picard group of $X(1)$ is simply $\mathbb Z$, and the only $p$-power Hecke operators (mod $p$) are iterates of Frobenius and Verschiebung, this lets one calculate the effect of a Hecke translation on the Picard group simply as multiplication by an integer.

In our case, the Picard group of $\Hilb$ is complicated and the action of $p$-power Hecke correspondences (which are not \'etale!) on the Picard group is not a-priori easy to compute. Nevertheless we fully compute the change in Faltings  height of an abelian surface with real multiplication by $\cO_F$ under $p$-power isogenies that respect the $\cO_F$-action. 

Suppose $C$ is a proper curve and $\abvar/C$ is a generically ordinary abelian surface with endomorphisms by $\cO_F$ (with $F$ a real quadratic field as in Theorem \ref{intromain}) so that the image of $C$ under the corresponding map $C \to \cH$ is an ample divisor of $\cH$. Suppose moreover that $\abvar_n/C$ is another abelian surface, also having endomorphisms by $\cO_F$, isogenous to $\abvar/C$ by a purely inseparable isogeny $\varphi: \abvar' \to \abvar$ which respects the $\cO_F$-actions, whose kernel is cyclic of size $p^n$ (indeed, this is the hardest case to treat, and the general case can be deduced from this case). Let $h_F(\abvar/C)$ denote the Faltings height of the abelian variety $\abvar/C$, which is defined to be the degree of the Hodge bundle $\omega$ of $\cH$ restricted to $C$. Then, the height of $\abvar_n/C$ grows exponentially in $n$.

In fact, we prove an exact formula for the change of height under inseparable isogenies (Theorem \ref{thm: change of heights formula}) but for simplicity of notation, we will be content with the version above for the introduction. We prove this result by comparing the Hodge bundle to the class of the Hasse invariant, and using our computation of the intersection of Hecke orbits of $C$ with the non-ordinary locus, and the product structure that we construct.

\subsection*{Acknowledgements}
We're very grateful to Jordan Ellenberg, Eyal Goren, Ruofan Jiang, Yunqing Tang, Salim Tayou, Yifan Wei and Tonghai Yang for useful discussions. We are also grateful to the AIM workshop on Arithmetic Intersection Theory on Shimura Varieties, in particular Shou-Wu Zhang's group, where the broader framework extending Theorem \ref{intromain} was discussed and formalized. We thank the referees for their careful readings and helpful suggestions. 

Q.H. is partially supported by a graduate school grant of  UW-Madison. A.N.S. is partially supported by the NSF grant DMS-2100436.

\section{Background material}\label{sec: background}

We provide some background (and standardize notation) on Hilbert modular surfaces in this section, define the notion of a partial Frobenius structure and prove a theorem about curves parametrizing abelian varieties without extra endomorphisms.

Let $F$ be a real quadratic field, $\cO_F$ its ring of integers, $p=\mathfrak{P}_1\mathfrak{P}_2$ a prime that splits in $\cO_F$ and $\mathfrak{a}$ a fractional ideal of $\cO_F$. One can define a moduli space parametrizing Abelian surfaces with endomorphisms by $\cO_F$:

\begin{definition}
Let $\cH_{n}^{\mathfrak a}$ be the moduli functor that associates to a $\mathbb F_p$-scheme $S$ the groupoid of tuples $(A,\iota,\lambda,\eta)$ where:
\begin{enumerate}
    \item $A\to S$ is an abelian surface;
    \item $\iota: \cO_F\to \operatorname{End}_S(A)$ is a ring homomorphism ;
    \item $\lambda : \mathfrak{a}\to \operatorname{Hom}^{\operatorname{Sym}}_{\cO_F}(A,A^{\vee})$ is an $\cO_F$-linear homomorphism such that $\lambda (a)$ is an $\cO_F$-linear polarization of $A$ for every totally positive $a\in \mathfrak{a}$, and the homomorphism $A\otimes_{\cO_F} \mathfrak{a} \stackrel{\sim}{\to}A^{\vee}$ induced by $\lambda$ is an isomorphism of abelian surfaces;
    \item  $\eta: (\cO_F/n\cO_F)^2_S\to A[n]$ is an $\cO_F$-linear isomorphism from the constant group scheme $(\cO_F/n\cO_F)^2_S$ to $A[n]$.
\end{enumerate}
\end{definition}

  It is known that $\cH_{n}^{\mathfrak a}$ is represented by a Deligne-Mumford stack over $\mathbb F_p$ and when $n \geq 3$, it is even represented by a scheme (see \cite[Section 5 of Chapter 3]{Go}). From now on, we fix some $n\geq 3$ that is prime to $p$, a polarization $\mathfrak a$ and denote the representing scheme by $\cH$. We note that $\cH$ is smooth (see \cite[Theorem 2.1.2]{Pa}).
  
  We now set up some notation. Let $\abvar/\cH$ denote the universal Abelian surface over $\cH$, and let $\pdiv$ denote its $p$-divisible group. We have that $\End(\abvar) = \cO_F$, and as $p$ splits in $F$, we have that $\End(\pdiv) = \cO_F\otimes \ZZ_p = \ZZ_p\times \ZZ_p$. This implies that $\pdiv = \pdiv_1\times \pdiv_2$. At any point $x\in \cH$, let $\abvar_x$ and $\pdiv_{i,x}$ denote the pullback of $\abvar$ and $\pdiv_i$ to $x$. Note that both factors $\pdiv_i$ are one-dimensional and have height 2. 
  
  \begin{lemma}\label{lem:productdeformation}
  Let $x \in \cH(\Fpbar)$ and let the deformation space of $\pdiv_i$ be $\Spf \Fpbar[\![t_i]\!]$\footnote{Implicit here is that the deformation space of a $p$-divisible group having height 2 and dimension 1 is one-dimensional and smooth.}. Then $\widehat{\cH}_x$ is canonically isomorphic to $\Spf \Fpbar[\![t_1]\!] \times \Spf \Fpbar[\![t_2]\!]$.
  \end{lemma}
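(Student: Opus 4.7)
\textbf{Proof plan for Lemma \ref{lem:productdeformation}.}

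The plan is to invoke Serre--Tate theory to reduce the problem to deformations of the $p$-divisible group with additional structure, and then exploit the splitting $\cO_F \otimes \ZZ_p \cong \ZZ_p \times \ZZ_p$ to decompose this deformation functor into two independent one-dimensional factors.

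First, by the Serre--Tate theorem in its version with additional structure, the formal deformation functor of the tuple $(\abvar_x, \iota_x, \lambda_x, \eta_x)$ on Artinian local $\Fpbar$-algebras is canonically equivalent to the deformation functor of the triple $(\pdiv_x, \iota_{p,x}, \lambda_{p,x})$, where $\iota_{p,x}$ and $\lambda_{p,x}$ are the induced $\cO_F \otimes \ZZ_p$-action and polarization on the $p$-divisible group. The prime-to-$p$ level-$n$ structure $\eta$ contributes nothing to the formal deformation problem, since $\abvar[n]$ is \'etale and thus deforms uniquely.

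Since $p$ splits in $\cO_F$, the two orthogonal idempotents of $\cO_F \otimes \ZZ_p \cong \ZZ_p \times \ZZ_p$ yield the canonical decomposition $\pdiv_x = \pdiv_{1,x} \times \pdiv_{2,x}$, which is forced to be respected by any $\cO_F$-linear deformation. Dualizing, $\pdiv_x^\vee$ decomposes compatibly as $\pdiv_{1,x}^\vee \times \pdiv_{2,x}^\vee$, and the triviality of the Rosati involution on $\cO_F$ (as $F$ is totally real) implies that the polarization $\lambda_{p,x}$ must preserve this splitting, decomposing as $\lambda_{1} \times \lambda_{2}$ with $\lambda_i : \pdiv_{i,x} \to \pdiv_{i,x}^\vee$. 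The condition that $\abvar \otimes_{\cO_F} \mathfrak a \to \abvar^\vee$ be an isomorphism guarantees that each $\lambda_i$ is an isomorphism on formal components. Consequently each $\lambda_i$ deforms uniquely along any deformation of $\pdiv_{i,x}$, so the deformation functor of $(\pdiv_x, \iota_{p,x}, \lambda_{p,x})$ is canonically the product of the (unpolarized) deformation functors of $\pdiv_{1,x}$ and $\pdiv_{2,x}$.

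Finally, each $\pdiv_{i,x}$ is a $p$-divisible group of height $2$ and dimension $1$ (as recorded in the statement), whose unrestricted deformation functor is prorepresented by $\Spf \Fpbar[\![t_i]\!]$ by the standard Lubin--Tate / Grothendieck--Messing calculation. Taking the product of the two factors yields the desired canonical isomorphism. The main subtlety in the argument is the compatibility of the polarization with the idempotent decomposition and the uniqueness of its deformation under the $\mathfrak a$-twisted normalization; once this is checked, the remainder of the argument is formal from Serre--Tate and Lubin--Tate theory.
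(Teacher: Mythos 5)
Your proof takes essentially the same route as the paper's: apply Serre–Tate to reduce to deformations of the $p$-divisible group with $\cO_F\otimes\ZZ_p$-action, then use the idempotents of $\ZZ_p\times\ZZ_p$ to split the deformation problem into the two one-dimensional factors $\pdiv_{1,x},\pdiv_{2,x}$. The only difference is one of thoroughness: the paper's two-sentence proof silently elides the polarization and level structure, whereas you explicitly verify that the prime-to-$p$ level structure deforms uniquely and that the $\mathfrak a$-polarization decomposes along the idempotents (via triviality of the Rosati involution on the totally real $\cO_F$) and then deforms uniquely on each one-dimensional factor — a useful piece of bookkeeping the published argument takes for granted.
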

\begin{proof}
 The formal neighbourhood $\widehat{\cH}_x$ parameterizes formal deformations of $\abvar_x$ which admit an action of $\cO_F$ (compatible with the action of $\cO_F$ on $\abvar_x$). By the Serre-Tate lifting theorem, we have that this is the same as formal deformations of $\pdiv_x$ which admit an action of $\cO_F\otimes \ZZ_p$ -- but this is the same as pairs of $p$-divisible groups which lift the pair $(\pdiv_{1,x},\pdiv_{2,x})$. The lemma follows.
\end{proof}

Henceforth, we will refer to the decomposition above as the product structure on $\widehat{\cH}_x$.

\subsection{Partial Frobenius structure and coordinates}
For ease of exposition, we henceforth assume that $\mathfrak{P}_1$ (and therefore $\mathfrak{P_2}$) are trivial in the narrow class group of $\mathcal O_K$. Otherwise, the analysis below will go through identically, except we work with $p^a$ (and $\Frob_{p^a}$) in place of $p$ (and $\Frob_p$), where $a$ is the order of $\mathfrak{P}_i$ in the narrow class group. 

\begin{definition}\label{defn: r(x)}
We say a surface $X$ over $\mathbb{F}_p$ has partial Frobenius structure if there is a factorization
\[\Frob_p = \pi_1\pi_2,\]
where $\pi_i: X\rightarrow X$ are maps which satisfy the following condition: 

\noindent For any $q=p^r$ a power of $p$, locally around any $x\in X(\F_q)$, we can find coordinates $t_1,t_2$ such that:

\begin{equation}\label{eq: pfrob coord}
    (\pi_i^{r})^*(t_j) = \begin{cases}t_j^{q}&\text{ if } i=j,\\
    t_j &\text{ otherwise }.\end{cases}
\end{equation}
\end{definition}

A product of curves over $\F_p$ is one example of a variety that has a partial Frobenius structure (where the $\pi_1,\pi_2$ are induced from each factor independently), and there is an obvious choice of coordinates as in \eqref{eq: pfrob coord}. In particular, the product of modular curves will be a very pertinent example.

The key assumption that $p$ splits in $\cO_F$ results in a partial Frobenius structure on $\cH$. In this case, we may define the maps $\pi_i$ using our moduli interpretation. Suppose $S$ is an arbitrary $\mathbb F_p$-scheme and $\abvar/S$ is an abelian surface corresponding to a point $x \in \cH(S)$.   Define $\pi_1(x)$ to be the point $y$ corresponding to the abelian scheme $\abvar / G_1 $, where $G_1 \subset \pdiv_{1,x}$ is the kernel of Frobenius. According to Section 2.2 of \cite{Pa}, $\abvar \mapsto \abvar / G_1$ defines a morphism $\cH^{\mathfrak{a}}\to \cH^{\mathfrak{P}_1\mathfrak{a}}$. As $\mathfrak{P}_1$ is trivial in the narrow class group, we have indeed obtained a morphism $\pi_1: \cH^{\mathfrak{a}}\to \cH^{\mathfrak{a}}$. 

 We define $\pi_2(x)$ analogously. Clearly, $\Frob_p =\pi_1\pi_2$. Given this description, we see that for any $x\in \cH(\F_q)$, $\pi_1^r$ induces the $q^{\textrm{th}}$-power map on the deformation space of $\pdiv_{1,x}$ and leaves the deformation space of $\pdiv_{2,x}$ unchanged. We therefore have the following proposition: 


\begin{proposition}\label{prop: local co-ords}
$\cH$ has a partial Frobenius structure, with $\Frob = \pi_1\pi_2$. Furthermore, at every point $x\in \cH(\Fpbar)$, the coordinates $t_1,t_2$ are just the coordinates induced by the product structure as in Lemma \ref{lem:productdeformation}.
\end{proposition}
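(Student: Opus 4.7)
The plan is to verify both statements by unwinding the moduli-theoretic definition of $\pi_1, \pi_2$ alongside the Serre-Tate identification of Lemma \ref{lem:productdeformation}, in essence consolidating the discussion in the paragraphs preceding the proposition.

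First I would formally verify that $\pi_i : (\abvar,\iota,\lambda,\eta) \mapsto (\abvar/G_i, \ldots)$ with $G_i = \ker(\Frob: \pdiv_i \to \pdiv_i^{(p)})$ is a morphism $\cH \to \cH$: the $\cO_F$-action and level-$n$ structure descend because $G_i$ is $\cO_F$-stable and of order coprime to $n$, the polarization descends to an $\mathfrak{P}_i\mathfrak{a}$-polarization by \cite[\S2.2]{Pa}, and triviality of $\mathfrak{P}_i$ in the narrow class group identifies $\cH^{\mathfrak{P}_i \mathfrak{a}} = \cH^{\mathfrak{a}} = \cH$. The factorization $\Frob_p = \pi_1 \pi_2$ then follows from the splitting $\pdiv = \pdiv_1 \oplus \pdiv_2$, which gives $\ker(\Frob_p : \abvar \to \abvar^{(p)}) = G_1 \oplus G_2$, so that $\abvar/\ker(\Frob_p) \cong \pi_2 \pi_1(\abvar)$.

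Next I would compute the action of $\pi_i$ on the formal neighborhood $\widehat{\cH}_x \cong \Spf \Fpbar[\![t_1]\!] \times \Spf \Fpbar[\![t_2]\!]$ from Lemma \ref{lem:productdeformation}. By Serre-Tate, a deformation of $\abvar_x$ is a pair of deformations of $(\pdiv_{1,x}, \pdiv_{2,x})$, and the moduli construction of $\pi_1$ sends such a pair $(\widetilde{\pdiv}_1, \widetilde{\pdiv}_2)$ to $(\widetilde{\pdiv}_1/\ker(\Frob), \widetilde{\pdiv}_2)$, leaving $t_2$ untouched. The remaining point is that the operation $\widetilde{\pdiv} \mapsto \widetilde{\pdiv}/\ker(\Frob)$ on the one-parameter deformation space of a height-$2$, dimension-$1$ $p$-divisible group agrees with the $p$-th power Frobenius on the deformation parameter; this is classical via the canonical multiplicative Serre-Tate coordinate at ordinary points, and via a compatible formal parameter at supersingular points. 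Combined with the symmetric analysis of $\pi_2$, this yields $\pi_i^*(t_j) = t_j^p$ if $i = j$ and $t_j$ otherwise.

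Finally, for general $x \in \cH(\Fpbar)$ defined over $\F_q$ with $q = p^k$, iterating $\pi_i$ exactly $k$ times — and using the intrinsic nature of the Serre-Tate coordinates, which makes them compatible at $x$ and its iterates $\pi_i^j(x)$ — yields the condition of Definition \ref{defn: r(x)} with $t_1, t_2$ equal to the product-structure coordinates of Lemma \ref{lem:productdeformation}. The main obstacle I anticipate lies in the coordinate identification of the previous paragraph: the $p$-th power identity is transparent at ordinary points via the multiplicative Serre-Tate coordinate, but requires a careful choice of formal parameter on the Lubin-Tate-style deformation space at supersingular points, together with a verification that ``quotient by the Frobenius kernel'' still induces the $p$-th power map on the chosen coordinate there.
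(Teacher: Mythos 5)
Your proposal is correct and follows essentially the same route as the paper, which treats the paragraph preceding the proposition as its proof: define $\pi_i$ by quotienting by the Frobenius kernel of $\pdiv_i$, invoke \cite{Pa} for the morphism $\cH^{\mathfrak{a}}\to\cH^{\mathfrak{P}_i\mathfrak{a}}$ together with triviality of $\mathfrak{P}_i$ in the narrow class group to land back in $\cH$, observe $\Frob_p=\pi_1\pi_2$ from $\pdiv=\pdiv_1\times\pdiv_2$, and read off the action on deformation parameters via the Serre--Tate product structure of Lemma \ref{lem:productdeformation}. The supersingular subtlety you flag can be sidestepped entirely: since $\pi_1$ (resp.\ $\pi_2$) acts as the identity on the $t_2$- (resp.\ $t_1$-) factor of the product, and their composite $\pi_1\pi_2=\Frob_p$ acts by $p$-th powers on both parameters, $\pi_i$ is forced to act by the $p$-th power on $t_i$ regardless of whether $\pdiv_{i,x}$ is ordinary or supersingular.
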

Let $x\in Z(\F_q)$ be a point, where $Z\subset \cH$ is the non-ordinary locus. Then according to the discussion above, the formal completion $\widehat{\Hilb}_{x} = \Spf \F_q[\![t_1,t_2]\!]$, where $t_i$ controls the deformation theory of $\pdiv_{x,i}$ and leaves $\pdiv_{x,i+1}$ constant (here, the indices are read modulo 2). Therefore, if $\pdiv_{x,1}$ is ordinary and $\pdiv_{x,2}$ is supersingular, the local equation of $Z$ at $x$ is just $t_1 = 0$; if $\pdiv_{2,x} $ is ordinary and $\pdiv_{1,x}$ is supersingular, then the local equation is $t_2 = 0$; and if $x$ itself is supersingular, then the local equation is $t_1t_2 = 0$.

  We also recall the global geometry of $Z \subset \Hilb$, following  \cite[\S4, Theorem 4.2]{BG} and relate it with the local geometry of $Z$ just discussed. The non-ordinary locus is a union of smooth irreducible curves which intersect transversally at supersingular points. There are two `types' of curves -- curves of type 1 and curves of type 2. Every non-supersingular point $x$ on curves of type 1 has the property that $\pdiv_{1,x}$ is ordinary while every non-supersingular point $y$ on curves of type 2 has the property that $\pdiv_{2,y}$ is ordinary. Curves of type 1 never intersect (and neither do curves of type 2) while every supersingular point has exactly one curve of type 1 and one curve of type 2 passing through it. Finally, the local equations of these curves are precisely as described in the paragraph just above.

We will use the following lemma in the proof of the theorem immediately after.

\begin{lemma}\label{lem: local to global}
Assume $K$ is a global function field, and $K_v$ is a completion of $K$ at some place $v$. Let $\pdiv_{/K}$ be an ordinary $p$-divisible group over $K$ with height 2 and dimension 1, and $\pdiv_{K_v}$ be its base change to $K_v$. If the connected-\'etale exact sequence for $\pdiv_{K_v}$ splits then the connected-\'etale exact sequence for $\mathcal G$ splits over $K$.
\end{lemma}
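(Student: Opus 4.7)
The plan is to descend the splitting level by level, reducing to the fact that $K \hookrightarrow K_v$ is a separable field extension.

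For each $n \geq 1$, consider the $K$-scheme $Y_n$ parametrising splittings of the sequence of finite flat group schemes
\[ 0 \to \pdiv^0[p^n] \to \pdiv[p^n] \to \pdiv^{\et}[p^n] \to 0; \]
explicitly, $Y_n$ is the closed subscheme of the (representable, finite) Hom-scheme $\underline{\Hom}_K(\pdiv^{\et}[p^n], \pdiv[p^n])$ cut out by the condition that composition with the projection $\pdiv[p^n]\to\pdiv^{\et}[p^n]$ is the identity. The first task is to show that $Y_n$ is geometrically a single (possibly non-reduced) point. Over $\bar K$, a splitting of the connected-\'etale sequence of $\pdiv[p^n]_{\bar K}$ exists (take the reduced subscheme, which is a subgroup since $\bar K$ is perfect), and any two splittings differ by a homomorphism from the \'etale group $\pdiv^{\et}[p^n]_{\bar K}$ to the infinitesimal group $\pdiv^0[p^n]_{\bar K}$, which must vanish in characteristic $p$. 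Hence $Y_n = \Spec(A_n)$ for a nonzero finite local $K$-algebra $A_n$, whose residue field $L_n := A_n/\mathfrak{m}_{A_n}$ is purely inseparable over $K$.

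Next, the hypothesis supplies a $K$-algebra map $A_n \to K_v$; since $A_n$ is local this must factor as $A_n \twoheadrightarrow L_n \hookrightarrow K_v$. Hence $L_n \subseteq K^{1/p^\infty}\cap K_v$. The key technical input is that $K \hookrightarrow K_v$ is a separable field extension. This follows from the fact that global function fields are excellent, so the completion $\cO_{K,v}\to\widehat{\cO}_{K,v}$ is geometrically regular and, in particular, its generic fibre $K \hookrightarrow K_v$ is separable. Separability of $K_v/K$ is equivalent to $K^{1/p}\cap K_v = K$, and iterating gives $K^{1/p^\infty}\cap K_v = K$; thus $L_n = K$, so $Y_n(K)\neq \emptyset$ and the sequence splits over $K$ at level $n$.

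Finally, the splitting at each finite level is unique over $K$ by the same Hom $=0$ argument (two splittings differ by an element of $\Hom_K(\pdiv^{\et}[p^n],\pdiv^0[p^n])=0$). Uniqueness forces the system $(s_n\colon \pdiv^{\et}[p^n]\to\pdiv[p^n])_{n\ge 1}$ to be mutually compatible -- the restriction of $s_{n+1}$ to $p^n$-torsion must coincide with $s_n$ -- and the $s_n$ assemble into a splitting $s\colon \pdiv^{\et}\to\pdiv$ of $p$-divisible groups over $K$. The main obstacle is verifying separability of $K_v/K$; once that is in hand, the rest of the argument is essentially formal.
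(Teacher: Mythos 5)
Your proof is correct and follows essentially the same strategy as the paper's: both realize the level-$n$ splittings as the points of a finite local $K$-scheme with purely inseparable residue field, and then reduce to the statement $K^{1/p}\cap K_v = K$ to conclude the splitting descends to $K$. The only notable variation is in how that last fact is justified---you invoke excellence of the global function field to get geometric regularity of the formal fiber, whereas the paper argues elementarily that the unique degree-$p$ purely inseparable extension $K^{1/p}$ contains a $p$th root of a uniformizer at $v$, which has non-integral $v$-valuation and so cannot lie in $K_v$; your explicit level-by-level compatibility argument at the end also makes precise a passage-to-the-limit step that the paper leaves implicit.
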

\begin{proof}
Jiang proves a more general result in \cite{ruofan}, but we include a different proof here for completeness. For any $n$, the connected-\'etale exact sequence of $\pdiv_{/K_v}[p^n]$ splits over some finite extension $L_n/K$.  We will prove that $L_n$ can be chosen to be a subfield of $K_v$. 

In order to show that we can choose $L_n \subset K_v$, we use the representability of $\Hom_{\text{gp sch}}(H,G)$ for any finite flat group schemes $H,G$. In general, $\Hom_{S}(X,Y)$ is representable if $X, Y$ are finite type schemes over $S$ and moreover, $X$ is flat, projective and $Y$ is quasi-projective over $S$ [\S 4c,  \cite{grothendieck1960techniques}] and the conditions for being a homomorphism of groups defines a closed subscheme. Splittings of the connected \'etale sequence
\[0 \to G_{n,\text{conn}} \to G_n \stackrel{\pi}{\to} G_{n,\text{\'et}} \to 0\]
can be identified with the closed subset of points of $f \in \Hom_{\text{gp sch}}(G_{n,\text{\'et}}, G_n)$ such that $f\circ\pi = \mathrm{id}$. In particular, they are parametrized by a scheme which we call $\Split/\Spec K$. 

We therefore have a splitting over $K_v, L_n$, i.e., $K_v,L_n$ valued points of $\Split$. We now consider $K_v,L_n \subset M$ for some algebraically closed field $M$. Let $\alpha \in \Split(M)$ be defined over some subfield $L$. Since any two splittings over $M$ differ by an automorphism of $G_{n,\text{conn}} \times G_{n,\text{\'et}}$ defined over $M$, and every one of these automorphisms is already defined over the ground field $K$, we have that every element of $\Split(M)$ is defined over $L$.  Therefore,
\[\Split(K_v) = \Split(M) = \Split(L_n) \implies \Split(K_v) = \Split(L_n\cap K_v).\]
Therefore, we can replace $L_n$ by $L_n \cap K_v$ to assume that $L_n \subset K_v$.

We now claim that $L_n/K$ is separable. Indeed, being a one-parameter function field, $K$ admits a unique degree $p$ inseparable extension necessarily containing all $p$-th roots of $K$, and which therefore contains the $p$-th root of a uniformizer of $K$ at $v$, which therefore can't be contained in $K_v$. It follows that $L_n/K$ is indeed separable. 

The splitting behaviour of the connected-\'etale exact sequence for any $p$-divisible group is insensitive to separable extensions, and therefore the connected-\'etale exact sequenc for $\pdiv[p^n]$ splits over $K$ for all $n$. The result follows.




\end{proof}

\begin{theorem}\label{lem: transversality}
Let $D\subset \cH$ be a generically ordinary, reduced, irreducible curve such that the generic abelian variety $A_{/K(D)}$ does not have an extra endomorphism\footnote{i.e., the ring of endomorphisms is $\cO_F$.}. We identify $\widehat{\cO}_{\cH,x}$ with $ \overline{\mathbb F}_p[\![t_1,t_2]\!]$ as in Lemma \ref{lem:productdeformation}. Let $\widehat{\cO}_{\cH,x}/(f_{D,x})$ be the formal completion of the local ring of $D$ at $x$. Then $t_i$ does not divide $f_{D,x}$ for any $i$.
\end{theorem}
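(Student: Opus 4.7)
The plan is to argue by contradiction: assume $t_1\mid f_{D,x}$ at some closed point $x\in D(\Fpbar)$ (the case $t_2\mid f_{D,x}$ is entirely symmetric), and derive that the generic abelian surface $A_{/K(D)}$ must have an extra $\cO_F$-linear endomorphism.

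The first step is to interpret the divisibility moduli-theoretically. By Lemma \ref{lem:productdeformation}, $\widehat{\cO}_{\cH,x}\cong \Fpbar[\![t_1,t_2]\!]$, where $t_i$ is the Serre--Tate coordinate on the deformation space of $\pdiv_{i,x}$. The formal subscheme $\{t_1=0\}\subset\widehat{\cH}_x$ is precisely the locus where $\pdiv_1$ is the constant deformation of $\pdiv_{1,x}$. Hence $t_1\mid f_{D,x}$ forces $\{t_1=0\}$ to be a formal branch of $D$ at $x$; letting $\widetilde D\to D$ be the normalization and $\widetilde x$ the preimage of $x$ corresponding to this branch, the completion $\widehat{\cO}_{\widetilde D,\widetilde x}$ is a complete DVR, and the induced map from $\widehat{\cO}_{\cH,x}$ kills $t_1$. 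The pullback of $\pdiv_1$ to $\Spf\widehat{\cO}_{\widetilde D,\widetilde x}$ is therefore the constant $p$-divisible group $\pdiv_{1,x}\otimes_{\Fpbar}\widehat{\cO}_{\widetilde D,\widetilde x}$, whose connected-\'etale exact sequence splits. Setting $K=K(D)$ and letting $v$ denote the place of $K$ corresponding to $\widetilde x$, this gives a splitting of the connected-\'etale exact sequence of $\pdiv_1|_{K_v}$. Applying Lemma \ref{lem: local to global} then yields a global splitting $\pdiv_1|_K\cong G_1^0\oplus G_1^{et}$ as $\ZZ_p$-linear $p$-divisible groups over $K$.

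The final step is to extract an extra $\cO_F$-linear endomorphism of $\abvar|_K$. The global splitting enlarges $\End_{\ZZ_p,K}(\pdiv_1|_K)$ from $\ZZ_p$ (the non-split generic case, where any endomorphism must preserve the non-trivial extension class) to $\ZZ_p\oplus\ZZ_p$ via the projections onto $G_1^0$ and $G_1^{et}$. Combined with $\End_{\ZZ_p,K}(\pdiv_2|_K)=\ZZ_p$, this yields $\End_{\cO_F\otimes\ZZ_p,K}(\pdiv_{\abvar}|_K)\supsetneq\cO_F\otimes\ZZ_p$. By a Tate-style isogeny theorem for ordinary abelian varieties over function fields of characteristic $p$, this extra $p$-divisible group endomorphism descends to an $\cO_F$-linear endomorphism of $\abvar|_K$ strictly larger than $\cO_F$, contradicting the hypothesis.

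The main obstacle is precisely this descent step: the characteristic-$p$ analogue of Tate's conjecture at $\ell=p$ is more delicate than the classical Faltings--Tate theorem, and care is required in passing from the $p$-divisible group back to the abelian variety. A cleaner alternative route would invoke a Chai-style formal rigidity statement: the condition $t_1\mid f_{D,x}$ realizes $D$ formally as containing a subtorus of the Serre--Tate torus at $x$, and such a formal subtorus rigidly extends to a sub-Shimura curve in $\cH$, which by construction parameterizes abelian surfaces carrying extra endomorphisms.
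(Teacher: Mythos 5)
Your proof follows essentially the same route as the paper: assume $t_1\mid f_{D,x}$, observe that $\pdiv_1$ is then the constant deformation along the corresponding formal branch of $D$, apply Lemma~\ref{lem: local to global} to propagate the local connected-\'etale splitting to a global one over $K(D)$, and conclude that $\End(\pdiv_1|_{K(D)})=\ZZ_p\oplus\ZZ_p$, whence by the $p$-adic Tate conjecture (which the paper invokes at exactly this step, also without further elaboration) $\abvar_{K(D)}$ has extra endomorphisms --- contradiction. The one small omission is the case where $\pdiv_{1,x}$ is supersingular: there the connected-\'etale sequence has no \'etale factor, so the ``splitting'' is vacuous and yields no extra endomorphisms; instead, as the paper spells out, a constant supersingular deformation forces $D$ into the non-ordinary locus, directly contradicting generic ordinariness. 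This dichotomy (ordinary versus supersingular at $x$) should be made explicit before you invoke the connected-\'etale splitting and Lemma~\ref{lem: local to global}, whose hypothesis already presupposes ordinariness over $K(D)$.
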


\begin{remark}
As an analogous situation to the above lemma, consider $X=X(1)\times X(1)$, then locally around $x$, we may choose $t_i$ to be the corresponding coordinate of the $i$-th $X(1) \cong \mathbb{P}^1$. 
\end{remark}

\begin{proof}
Without loss of generality, we may replace $D$ with its normalization, and pull back the universal abelian surface to $D$. Therefore, we may assume that $D$ is smooth - however, the map $D\rightarrow \cH$ need no longer be an embedding. Nevertheless, given any point $x\in D$, we obtain a map of formal schemes $\psi: \widehat{D}_x \rightarrow \widehat{\cH}_x$ (we abuse notation by letting $x$ denote both the point of $D$ and its image in $\cH$). Let $f_{D,x} \in \widehat{\cO}_{\cH_{x}}$ denote the defining equation of the image of $\widehat{D}_x$. 

It suffices to prove that if $t_i$ divides $f_{D,x}$ for $i=1$ or $2$, then either $D$ is not generically ordinary, or $\abvar /K(D)$ has extra endomorphisms. Without loss of generality, assume $t_1 \mid f_{D,x}$. If $\pdiv_{x,1}$ is supersingular, then by the description of the non-ordinary locus following Proposition \ref{prop: local co-ords}, we see that $\abvar/D$ must be non-ordinary. Therefore, we suppose that $\pdiv_{x,1}$ is ordinary. 
The inclusion of the divisor corresponds to
\begin{align*}
    \widehat{D}_x \cong \operatorname{Spec} \overline{\mathbb F}_p[\![u]\!] &\to \widehat{\cH}_{d,x}\cong \operatorname{Spec} \widehat{\cO}_{\cH,x};\\
    t_1 &\stackrel{\psi}{\to} 0,\\
    \quad t_2 &\stackrel{\psi}{\to} \bar{t}_2 \in \overline{\mathbb F}_p[\![u]\!].
\end{align*}
Here, $\widehat{D}_x$ is the formal completion of $D$ at $x$. 
As in Lemma \ref{lem:productdeformation}, we know $\widehat{\cO}_{\cH,x}$ is the product of the deformation spaces of $\pdiv_{i,x}$. Notice that $\psi(t_1)=0$ implies that $\pdiv_{x,1}$ remains constant along $\widehat{D}_x$. In other words, $$\pdiv_{\widehat{D}_x}=(\pdiv_{x,1}\times \widehat{D}_x)\times  \pdiv_{\widehat{D}_x,2}.$$ 
The connected-\'etale exact sequence of $\pdiv_{x,1}$ splits since it is over a perfect field and hence, so does the connected-\'etale exact sequence of $\abvar_{\hat{D}_x}[p^\infty]_1$. By Lemma \ref{lem: local to global}, this implies that the connected-\'etale exact sequence for  $\pdiv_1$ already splits over $K(D)$. Consequently, $\End(\pdiv_{D,1}) =\Z_p\oplus \Z_p$, whence $\End(\pdiv_D)$ is strictly larger than $\Z_p\otimes \cO_F$. Applying the crystalline Tate conjecture for endomorphisms of abelian varieties (\cite[Theorem 2.6]{DJ}) implies $\abvar_D$ has endomorphism ring larger than $\cO_F$, as required.


\end{proof}

\section{Local intersection on varieties with a split Frobenius}\label{sec: local intersection bounds}

In this section, we assume that $X$ is a surface over $\mathbb F_p$ with a partial Frobenius structure. Throughout this section, we fix such a system of coordinates around each point as in \eqref{eq: pfrob coord}. As in Section \ref{sec: background}, Hilbert modular surfaces where $p = \mathfrak{P}_1\mathfrak{P}_2$ with $\mathfrak{P}_i$ trivial in the narrow class group of $\cO_F$ are examples of such surfaces. As noted in Section \ref{sec: background}, as long as $p$ splits completely in $\cO_F$, $\cH$ will admit a partial $p^a$-Frobenius structure where $a$ is the order of $\mathfrak{P}_1$ in the narrow class group. 

Note that the Frobenius (and hence the $\pi_i$) are universal homeomorphisms, i.e., they induce homeomorphisms on the underlying topological space. In particular, the preimage of any point under the $\pi_i$ is also exactly one point.

The results in this section will be purely local around a point $x \in X(\F_q)$. Note that $\pi_1$ need not fix $x$ but some power of it will so we may assume that $X = \Spec R$ with $R= \mathbb F_q[\![t_1,t_2]\!]$ and $x$ is the origin given by the vanishing of $t_1,t_2$. In local coordinates (a power of) $\pi_i$ corresponds to (a power of) Frobenius along the $t_i$ coordinates. For a point $x \in X$, we define $r(x)$ as the smallest value so that $\pi_1^{r(x)}(x) = \pi_2^{r(x)}(x) = x$.

Now, let $C,D \subset X$ be two curves and $x \in C \cap D$ and define $C_n = (\pi_1^{n})^{-1}(C)$ to be the pullback of $C$ under $\pi_1^n$, where $n$ satisfies $\pi_1^n(x) = x$. In this section, we prove results about the intersection numbers $C_n \cdot D$. Theorem \ref{thm: local intersection, moving curve} (1) owes its inspiration to \cite[Proposition 7.3]{CO06}.

\begin{theorem}\label{thm: local intersection, moving curve}
Let  $C = V(f), D = V(g)$ locally around $x \in \Hilb$ and $n = mr(x) \to \infty$ be part of a sequence of integers divisible by $r(x)$ and increasing without bound. Suppose moreover that $t_1,t_2 \nmid f$ and $t_1\nmid g$. 
\begin{enumerate}
    \item Suppose $t_2\nmid g$. Then, the local intersection number $(C_n\cdot D)_x$ is bounded as $n  \to \infty$.
    \item Suppose $t_2\mid g $. Then, the local intersection number $(C_n\cdot D)_x \to \infty$ as $n  \to \infty$.
\end{enumerate}
The analogous result holds with the roles of $\pi_1,\pi_2$ (and $t_1,t_2$) reversed.  
\end{theorem}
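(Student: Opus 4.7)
The plan is to pass to completions, use Weierstrass preparation to replace $g$ by a polynomial in $t_1$ of finite degree, and then express $(C_n\cdot D)_x$ as the $t_2$-adic order of a norm whose behaviour as $n\to\infty$ is governed by where the roots of that polynomial are sent by $t_1\mapsto t_1^{q^n}$. After replacing $\pi_1$ by $\pi_1^{r(x)}$ so that $x$ is fixed, the coordinates of \eqref{eq: pfrob coord} identify $\widehat{\cO}_{X,x}\cong \F_q[\![t_1,t_2]\!]$ on which $\pi_1^n$ acts by $t_1\mapsto t_1^{q^n},\; t_2\mapsto t_2$ for the appropriate $p$-power $q$. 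Hence $C_n$ is locally cut out by $f_n:=f(t_1^{q^n},t_2)$ and
\[
(C_n\cdot D)_x \;=\; \dim_{\F_q}\F_q[\![t_1,t_2]\!]/(f_n,\,g).
\]

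For part (1), since $t_2\nmid g$, Weierstrass preparation gives $g = u\cdot P$ with $u$ a unit and $P = t_1^d+c_1(t_2)t_1^{d-1}+\cdots+c_d(t_2)$, each $c_i\in \F_q[\![t_2]\!]$ with $c_i(0)=0$; the hypothesis $t_1\nmid g$ forces $c_d\neq 0$. Letting $\alpha_1,\ldots,\alpha_d$ denote the roots of $P(t_1,t_2)$ in an algebraic closure of $\F_q(\!(t_2)\!)$, a Newton polygon argument using $c_i(0)=0$ shows that every $\alpha_i$ has strictly positive $t_2$-adic valuation. Since $\F_q[\![t_1,t_2]\!]/(P)$ is a free $\F_q[\![t_2]\!]$-module of rank $d$, the intersection length equals the $t_2$-adic order of the norm of $f_n$, namely
\[
(C_n\cdot D)_x \;=\; \sum_{i=1}^d \operatorname{ord}_{t_2}\, f(\alpha_i^{q^n},t_2).
\]
As $n\to\infty$, $v(\alpha_i^{q^n})=q^n v(\alpha_i)\to\infty$, so each $f(\alpha_i^{q^n},t_2)$ converges $t_2$-adically to $f(0,t_2)$, which is a \emph{nonzero} element of $\F_q[\![t_2]\!]$ by the hypothesis $t_1\nmid f$. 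Consequently each summand stabilizes at $\operatorname{ord}_{t_2}f(0,t_2)$, and the total remains bounded.

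For part (2), write $g = t_2^a h$ with $a\ge 1$ and $t_2\nmid h$. Because $t_2\nmid f_n$, the divisors $C_n$ and $V(t_2)$ share no component near $x$, and additivity of local intersection multiplicity in one argument yields
\[
(C_n\cdot D)_x \;=\; a\,(C_n\cdot V(t_2))_x\;+\;(C_n\cdot V(h))_x.
\]
The first summand equals $\dim_{\F_q}\F_q[\![t_1]\!]/(f(t_1^{q^n},0))$; the hypothesis $t_2\nmid f$ together with $x\in C$ gives $f(t_1,0)=t_1^e u(t_1)$ with $u$ a unit and some $e\ge 1$, so $f(t_1^{q^n},0)$ has $t_1$-adic valuation $eq^n$. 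This contributes $aeq^n\to\infty$, and since the second summand is nonnegative, $(C_n\cdot D)_x\to\infty$. The analogue with $\pi_1,\pi_2$ and $t_1,t_2$ swapped is identical.

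The main difficulty is the Newton polygon bookkeeping in part (1): one must verify both that every root $\alpha_i$ acquires positive $t_2$-adic valuation, so that $\alpha_i^{q^n}\to 0$, and that the limiting $f(0,t_2)$ is a nonzero element of finite $t_2$-order. These two facts are exactly where the hypotheses $t_1\nmid g$ and $t_1\nmid f$ enter, and together they separate the bounded regime of (1) from the divergent regime of (2), where the obstruction $t_2\mid g$ supplies an entire component $V(t_2)$ against which $C_n$ collects intersections of order $q^n$.
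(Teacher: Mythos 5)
Your proof is correct, and it takes a genuinely different route from the paper's.

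The paper's proof normalizes $D$ near $x$, parameterizes each branch of $D$ as $t_i\mapsto\alpha_i(u)=a_iu^{k_i}$ in $\overline{\F}_p[\![u]\!]$, and computes $(C_n\cdot D)_x$ as $\dim\overline{\F}_p[\![u]\!]/(f(\alpha_1^{p^n},\alpha_2))$ by comparing $u$-adic valuations of the two pieces in the decomposition $f=t_2^e\tilde f(t_2)+t_1h$. Your argument stays inside $\widehat{\cO}_{X,x}$, replaces $g$ by a Weierstrass polynomial $P$ in $t_1$ over $\F_q[\![t_2]\!]$, and recasts the intersection length as $\operatorname{ord}_{t_2}$ of the norm $\prod_i f(\alpha_i^{q^n},t_2)$ taken over the roots $\alpha_i\in\overline{\F_q(\!(t_2)\!)}$, with the Newton polygon delivering $v(\alpha_i)>0$ so that $\alpha_i^{q^n}\to 0$ and each factor tends $t_2$-adically to the nonzero limit $f(0,t_2)$. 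These are two standard dual pictures — branch parameterization of $D$ (Puiseux expansion of $u$ in terms of $t_2$) versus Weierstrass/resultant over the base $\F_q[\![t_2]\!]$ — and they match term by term: each branch of multiplicity $k_2$ over $t_2$ contributes $k_2$ roots $\alpha_i$, so your limiting value $d\cdot\operatorname{ord}_{t_2}f(0,t_2)$ is exactly the paper's $\sum ek_2$. For part (2) your use of additivity $(C_n\cdot D)_x = a(C_n\cdot V(t_2))_x + (C_n\cdot V(h))_x$ is slightly cleaner than the paper's, which implicitly isolates the branch with $\alpha_2=0$; it makes explicit that the remaining branches (handled by part (1)) stay bounded while the $V(t_2)$ component alone forces the $q^n$ growth. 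Both approaches also implicitly show, as a byproduct, that the intersection is proper at $x$ for $n\gg 0$: in the paper because $f(\alpha_1^{p^n},\alpha_2)$ is a nonzero series once the two valuations separate, in yours because the norm tends to the nonzero $f(0,t_2)^d$.

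One small point worth flagging so a reader doesn't stumble: the norm identity $\dim_{\F_q}A/(f_n)=\operatorname{ord}_{t_2}\det(\cdot f_n)$ with $A=\F_q[\![t_2]\!][t_1]/(P)$ requires $f_n$ to be a nonzerodivisor in $A$, and $f_n$ evaluated at a root $\alpha_i$ of positive valuation must be interpreted via convergence in the (complete) finite extension of $\F_q(\!(t_2)\!)$ containing $\alpha_i$; both are fine, and your limiting argument itself shows the nonzerodivisor condition holds for $n\gg 0$, but stating these explicitly would make the argument airtight.
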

\begin{proof}

In order to compute intersection numbers, we can replace $D$ by its normalization and consider each component separately \cite[Example A.3.1]{fulton2013intersection}. We therefore have $\widehat\cO_{D,x} = \overline{\mathbb F}_p[\![u]\!]$ and we suppose that the morphism $D \to \Hilb$ is locally around $x$ given by
\begin{align*}
    \overline{\mathbb F}_p[\![t_1,t_2]\!] &\to \overline{\mathbb F}_p[\![u]\!]\\
    t_i &\to \alpha_i.
\end{align*}
For the first part of the theorem, $\alpha_1,\alpha_2 \neq 0$ since $t_1,t_2\nmid g$. We therefore define $\alpha_i = a_iu^{k_i}$ with $a_i$ a unit and $k_i \geq 1$. Since $x \in C$ and $r(x)|n$, we have $x \in C_n$. As $C$ is defined by $f(t_1,t_2)$, $C_n$ is defined by $f(t_1^{p^n},t_2)$ so that
\[(C_n.D)_x = \dim_{\overline{\mathbb F}_p}\frac{\overline{\mathbb F}_p[\![u]\!]}{f(\alpha_1^{p^n},\alpha_2)}.\]
By assumption, $t_1\nmid f$ so that we can write $f(t_1,t_2) = t_2^{e}\tilde{f}(t_2) + t_1h(t_1,t_2)$ for $e \geq 1$ and $\tilde{f}(t_2)$ a unit. Therefore,
\begin{align*}
    f(\alpha_1^{p^n},\alpha_2) &= \alpha_2^e\tilde{f}(\alpha_2) + \alpha_1^{p^n}h(\alpha_1^{p^n},\alpha_2)\\
    &= a_2^e u^{k_2e}\tilde{f}(a_2u^{k_2}) + a_1^{p^n}u^{p^nk_1}h(a_1^{p^n}u^{p^nk_1},a_2u^{k_2}).
\end{align*}
For $n$ large enough, the $u$-adic valuation of the second term is larger than the $u$-adic valuation of the first term since $\tilde{f}$ is a unit. Consequently (for $n$ large enough), $f(\alpha_1^{p^n},\alpha_2)$ is divisible by exactly $u^{ek_2}$ which proves that $(C_n.D)_x = ek_2$ which is independent of $n$.

In the second case, $\alpha_2 = 0$. Let $f(t_1,0) = t_1^dw(t_1)$ with $d\geq 1$ and $w(t_1)$ a unit. We then have (with $k_1$ as in the first part)
\begin{equation}\label{eq:hasse}
(C_n.D)_x = \dim_{\overline{\mathbb F}_p}\frac{\overline{\mathbb F}_p[\![u]\!]}{f(\alpha_1^{p^n},0)} = d k_1p^n
\end{equation}
which proves that $(C_n.D)_x \to \infty$ as $n \to \infty$.

\end{proof}
We have the following corollary.
\begin{corollary}\label{exactformula}
Let $C$ be as above, and suppose now that $Z_i \subset \widehat{\cH}_x$ are the formal curves defined by $t_i = 0$ for $i=1,2$. Then, $(C_n.Z_1)_x = (C.Z_1)_x$ and $(C_n.Z_2)_x = p^n(C.Z_2)_x $.
\end{corollary}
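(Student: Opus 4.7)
The plan is to deduce both identities from the explicit formulas worked out in the proof of Theorem \ref{thm: local intersection, moving curve}, specialized to the coordinate axes $Z_i$.

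First, for $(C_n.Z_2)_x$: the curve $Z_2=V(t_2)$ satisfies the hypotheses of part (2) of the theorem (namely $t_1\nmid t_2$ and $t_2\mid t_2$). Parameterizing $Z_2$ by $u\mapsto (u,0)$ corresponds to $\alpha_1=u$, $\alpha_2=0$, so $k_1=1$ in the notation of the proof. Equation \eqref{eq:hasse} therefore gives $(C_n.Z_2)_x = d\, p^n$, where $d$ is the $t_1$-adic order of $f(t_1,0)$; the same formula at $n=0$ gives $(C.Z_2)_x = d$, yielding the claimed relation.

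For $(C_n.Z_1)_x$: here the test curve has $g=t_1$, so the hypothesis $t_1\nmid g$ fails and the theorem does not apply verbatim. One instead argues directly from the definition: the equation cutting out $C_n$ is $f(t_1^{p^n},t_2)$, which upon restriction along the parameterization $u\mapsto(0,u)$ of $Z_1$ becomes $f(0,u)$, manifestly independent of $n$. Hence the local intersection length $\dim_{\overline{\mathbb F}_p}\overline{\mathbb F}_p[\![u]\!]/(f(0,u))$ is the same for all $n$, and in particular equals $(C.Z_1)_x$.

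There is no real obstacle here; the corollary is essentially a direct transcription of the two computations already appearing in the proof of the theorem. The only mild subtlety, which one should flag explicitly, is that $Z_1$ lies outside the hypotheses of the theorem statement, so one must invoke the internal calculation rather than citing the theorem as a black box — but in this case the calculation trivializes, as $C_n$ and $C$ pull back to the same ideal on $Z_1$.
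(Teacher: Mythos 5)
Your proof is correct and matches the paper's approach exactly: the paper says "The first equality follows by inspection and the second from Equation \eqref{eq:hasse}," and your argument is precisely that expansion, including the useful observation that $Z_1$ falls outside the literal hypotheses of Theorem~\ref{thm: local intersection, moving curve} so the first equality must be checked directly (which is what "by inspection" means there).
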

\begin{proof}
The first equality follows by inspection and the second from Equation \eqref{eq:hasse}.
\end{proof}

\section{Change of height under $p$-power isogenies}

In this section, we will describe some instances in which we can describe how the height of a generically ordinary proper curve $C \subset \cH$ changes under isogenies induced by the $\pi_i$. The idea is to use the fact that the Faltings height equals (up to a scaling factor of $\frac{1}{p-1}$) the intersection of $C$ with the non-ordinary locus. 

\subsection{The non-ordinary locus.}\label{sec:nonordinarylocus}

Let $Z$ be the non-ordinary locus. We recall the following description of $Z$ given in $\S 2$. The non-ordinary locus is a union of smooth irreducible curves which intersect transversally at supersingular points. There are two `types' of such curves -- curves of type 1 and curves of type 2 (coming from the local splitting of the Frobenius). Every non-supersingular point $x$ on curves of type 1 has the property that $\pdiv_{1,x}$ is ordinary while every non-supersingular point $y$ on curves of type 2 has the property that $\pdiv_{2,y}$ is ordinary. Curves of type 1 never intersect (and neither do curves of type 2) while every supersingular point has exactly one curve of type 1 and one curve of type 2 passing through it. Finally, the local equations of these curves are precisely as described in the paragraph following Proposition \ref{prop: local co-ords}.

For brevity, we write $Z = Z_1\cup Z_2$, where $Z_i$ is the union of curves of type $i$. The main result of this section is the following: 
\begin{theorem}\label{thm:nonordintersec}
Let $C/ \subset \cH$ denote a proper generically ordinary curve defined over $\F_{q'}$, and suppose that all the non-ordinary points of $C$ are contained in $C(\F_q)$ for some $q = p^{n_0}$. Then as $n$ goes to infinity, we have $((\pi_1^{n_0n})^{-1}(C)\cdot Z) =q^n (C. Z_2) + (C.Z_1)$, and $((\pi_2^{n_0n})^{-1}(C).Z) = q^n(C\cdot Z_1) + (C.Z_2)$. 
\end{theorem}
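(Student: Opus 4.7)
The plan is to decompose the global intersection number $((\pi_1^{n_0n})^{-1}(C) \cdot Z)$ into a sum of local contributions, one at each non-ordinary point of $C$, and then apply Corollary \ref{exactformula} pointwise. Since $C$ is proper and generically ordinary, the intersection $C \cap Z$ is a finite set of closed points, all contained in $C(\F_q)$ by hypothesis. Around every such point $x$ the partial Frobenius structure of Proposition \ref{prop: local co-ords} furnishes canonical formal coordinates $(t_1, t_2)$ on $\widehat{\cO}_{\cH,x}$ in which the formal branches of $Z_1$ and $Z_2$ through $x$ are cut out by $t_1 = 0$ and $t_2 = 0$ respectively, and in which $\pi_1^{n_0}$ acts by $t_1 \mapsto t_1^{q}$, $t_2 \mapsto t_2$. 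After enlarging $n_0$ by a bounded factor if necessary, one can arrange that $\pi_1^{n_0}$ fixes every point of $C \cap Z$ individually, rather than just that $\Frob_q = (\pi_1\pi_2)^{n_0}$ does. This places us in exactly the setup of Section \ref{sec: local intersection bounds}.

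To apply Corollary \ref{exactformula} at each $x$, I would verify its input hypothesis that neither $t_1$ nor $t_2$ divides $f_{C,x}$, the local defining equation of $C$. This is precisely the conclusion of Theorem \ref{lem: transversality}, under the (implicit) assumption that the generic abelian surface over $C$ has endomorphism ring exactly $\cO_F$. Granting this, Corollary \ref{exactformula} applied with exponent $n_0 n$ yields
\[
((\pi_1^{n_0n})^{-1}(C) \cdot Z_1)_x = (C \cdot Z_1)_x, \qquad ((\pi_1^{n_0n})^{-1}(C) \cdot Z_2)_x = q^n (C \cdot Z_2)_x
\]
at every non-ordinary $x \in C$. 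Since $Z_1$ and $Z_2$ meet only in isolated supersingular points, the divisor identity $Z = Z_1 + Z_2$ holds, and summing the above equalities over all such $x$ produces the first claimed formula $((\pi_1^{n_0n})^{-1}(C) \cdot Z) = (C \cdot Z_1) + q^n (C \cdot Z_2)$. The second formula then follows immediately from the symmetric version of Corollary \ref{exactformula} with the roles of $t_1, t_2$ (hence $Z_1, Z_2$) swapped.

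The principal obstacle is the transversality input from Theorem \ref{lem: transversality}: without a no-extra-endomorphisms assumption on $C$, a formal branch of $C$ at some non-ordinary point could lie along a $t_i$-axis, invalidating part (1) of Theorem \ref{thm: local intersection, moving curve} and introducing a spurious $q^n$ factor at that point. A secondary bookkeeping matter is ensuring that $\pi_1^{n_0}$ itself (not merely $\Frob_q$) fixes each point of $C \cap Z$; this is accomplished by enlarging $n_0$ by the least common multiple of the $\pi_1$-orbit lengths on the finite set $C \cap Z$, which is harmless given the hypothesis $C \cap Z \subseteq C(\F_q)$.
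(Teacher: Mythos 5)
Your approach matches the paper's (terse) proof exactly: decompose $((\pi_1^{n_0n})^{-1}(C)\cdot Z)$ into local contributions at the finitely many points of $C\cap Z$, identify the branch of $Z$ through each such point with a coordinate axis $\{t_i = 0\}$ via the description in Section 4.1, and apply Corollary \ref{exactformula} pointwise before summing over $Z = Z_1 + Z_2$. So the structure is correct and the details you fill in are the right ones.

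Your first worry --- that transversality requires importing the no-extra-endomorphisms hypothesis from Theorem \ref{lem: transversality} --- is over-cautious, and carrying it out as you describe would actually make the theorem weaker than the paper intends (Theorem \ref{thm: change of heights formula} needs \ref{thm:nonordintersec} under generic ordinariness alone). Here is why generic ordinariness already suffices. At a non-ordinary point $x$, the only axes $\{t_i = 0\}$ appearing in the intersection are exactly those contained in $Z$ near $x$ (both at a supersingular point, just one at a non-supersingular non-ordinary point). If such a $t_i$ divided $f_{C,x}$, a formal branch of $C$ would lie inside $Z$, contradicting generic ordinariness. The \emph{other} axis (at a non-supersingular point) may well divide $f_{C,x}$, but that axis is not a component of $Z$ near $x$ and never enters the computation: the equality $(C_n \cdot \{t_i=0\})_x = (C \cdot \{t_i=0\})_x$ (resp. $= p^{n_0n}(C\cdot\{t_i=0\})_x$) requires only $t_i\nmid f$, as one sees by inspecting the proof of Theorem \ref{thm: local intersection, moving curve}; the blanket hypothesis $t_1,t_2\nmid f$ is convenient for stating the corollary but not both halves are used at once. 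In particular, no ``spurious $q^n$ factor'' can arise this way.

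Your second worry --- that $\pi_1^{n_0}$ may not individually fix points of $C\cap Z$, only $\Frob_q=\pi_1^{n_0}\pi_2^{n_0}$ does --- is a reasonable reading of how Theorem \ref{thm: local intersection, moving curve} and Corollary \ref{exactformula} are phrased (with $n$ required to be a multiple of $r(x)$). Two remarks. First, if you enlarge $n_0$ to make $\pi_1^{n_0}$ fix every non-ordinary point, the constant $q$ in the displayed formula must be replaced by $p^{n_0'}$ for the enlarged exponent $n_0'$; the paper's formula is tied to the exponent actually used in $\pi_1^{n_0 n}$. Second, the enlargement is not strictly necessary: since $\pi_1$ is a universal homeomorphism preserving the Serre--Tate product structure, the point $x' = (\pi_1^{n_0n})^{-1}(x) = \pi_2^{n_0n}(x)$ carries coordinates $(s_1,s_2)$ with $(\pi_1^{n_0n})^*t_1 = s_1^{q^n}$, $(\pi_1^{n_0n})^*t_2 = s_2$, and the local branch of $Z$ through $x'$ is again the corresponding axis; so the same one-line computation gives $(C_n\cdot Z)_{x'} = (C\cdot Z)_x$ or $q^n(C\cdot Z)_x$ directly, without requiring $x'=x$. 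Either route closes the gap, and the second avoids changing the constant $q$ in the statement.
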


\begin{proof}
This follows directly from the description of the non-ordinary locus as well as Corollary \ref{exactformula}.
\end{proof}

We keep the notation in the previous result. Recall that we define the Faltings height of the abelian variety $\abvar_C$ to be the degree of the Hodge bundle $\omega$ restricted to $C$. As in \cite{MST1} and \cite{MST}, the class of the Hodge bundle can be expressed in terms of the non-ordinary locus. To be precise, we have that $Z$ is the vanishing locus of the Hasse invariant, which is a section of $\omega^{p-1}$ (for example, see \cite[Section 1.4]{boxer}). This, together with Theorem \ref{thm:nonordintersec} gives the following result.
\begin{theorem}\label{thm: change of heights formula}
The Faltings heights of $\abvar_{(\pi_1^{n_0n})^{-1}(C)}$ and $\abvar_{(\pi_2^{n_0n})^{-1}(C)}$ are \[h_F(\abvar_{(\pi_1^{n_0n})^{-1}(C)}) = \frac{q^n (C\cdot Z_2) + (C.Z_1)}{p-1} \text{ and } h_F(\abvar_{(\pi_2^{n_0n})^{-1}(C)}) = \frac{q^n (C. Z_1) + (C.Z_2)}{p-1}.\] In particular, if $C$ is also ample\footnote{This would imply that $(C.Z_1)$ and $(C.Z_2)$ are both positive. Even if $C$ is not ample, the curve $Z_1\cup Z_2$ is ample, and so either $(C.Z_1)$ or $(C.Z_2)$ must be positive, whence we obtain the same asymptotic formula for the Faltings heights of either $\abvar_{(\pi_1^{n_0n})^{-1}(C)}$ or $\abvar_{(\pi_2^{n_0n})^{-1}(C)}$.}, we have $h_F(\abvar_{(\pi_1^{n_0n})^{-1}(C)}) \asymp q^n \asymp h_F(\abvar_{(\pi_2^{n_0n})^{-1}(C)})$ as $n \to \infty$. 
\end{theorem}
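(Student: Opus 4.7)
The proposal is to reduce everything to Theorem \ref{thm:nonordintersec} by reinterpreting the Faltings height as an intersection number with the non-ordinary locus, and then to handle the asymptotic claim by a short ampleness argument.

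First I would observe that, because $Z$ is cut out by a global section of $\omega^{p-1}$ (cited from \cite{boxer}), for any proper curve $C'\subset\cH$ whose generic point is ordinary (so that this section does not vanish identically on $C'$), standard intersection theory yields
\[
(C'\cdot Z)\;=\;\deg\bigl(\omega^{p-1}|_{C'}\bigr)\;=\;(p-1)\deg\bigl(\omega|_{C'}\bigr)\;=\;(p-1)\,h_F(\abvar_{C'}).
\]
Thus $h_F(\abvar_{C'})=(C'\cdot Z)/(p-1)$. Applying this identity with $C'=(\pi_1^{n_0n})^{-1}(C)$ (respectively $C'=(\pi_2^{n_0n})^{-1}(C)$), which is a proper curve since $\pi_i$ is finite and a universal homeomorphism, and noting that $C'$ is still generically ordinary because $\pi_i$ preserves the ordinary locus, I would then substitute the exact values of $((\pi_i^{n_0n})^{-1}(C)\cdot Z)$ coming from Theorem \ref{thm:nonordintersec}. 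This immediately produces both displayed formulas.

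For the final asymptotic assertion, I would argue as follows. If $C$ is ample, then since $Z_1$ and $Z_2$ are each nonempty effective divisors on $\cH$, the intersection numbers $(C\cdot Z_1)$ and $(C\cdot Z_2)$ are both strictly positive, so the dominant term $q^n(C\cdot Z_2)$ (resp.\ $q^n(C\cdot Z_1)$) forces $h_F(\abvar_{(\pi_1^{n_0n})^{-1}(C)})\asymp q^n$ and $h_F(\abvar_{(\pi_2^{n_0n})^{-1}(C)})\asymp q^n$. For the footnoted strengthening, when $C$ is not necessarily ample but $Z_1\cup Z_2$ is (an observation that can be taken as known from the geometry of $\cH$, or checked via the fact that $\omega^{p-1}\cong \mathcal{O}(Z_1+Z_2)$ is ample since $\omega$ is ample on the minimal compactification and $C$ lies in the open Hilbert modular surface), we have $(C\cdot(Z_1+Z_2))>0$, so at least one of $(C\cdot Z_i)$ is positive, yielding the corresponding $\asymp q^n$ growth for at least one of the two sequences of heights.

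The main obstacle is essentially nil at this stage: given Theorem \ref{thm:nonordintersec} and the Hasse-invariant identification of $Z$ with a section of $\omega^{p-1}$, the theorem is a formal consequence. The only conceptual points to keep in mind are (i) that $(\pi_i^{n_0n})^{-1}(C)$ must be treated scheme-theoretically (with its Frobenius-twist structure) so that the intersection numbers of Theorem \ref{thm:nonordintersec} literally apply, and (ii) that $C'$ remains generically ordinary, which is needed to ensure the Hasse-invariant section does not vanish identically on $C'$ and so the identification $(C'\cdot Z)=(p-1)\deg(\omega|_{C'})$ is valid as an equality of intersection numbers rather than merely an inequality.
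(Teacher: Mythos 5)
Your proposal follows the paper's argument exactly: identify $Z$ as the vanishing locus of a section of $\omega^{p-1}$, so that $(C'\cdot Z)=(p-1)h_F(\abvar_{C'})$ for generically ordinary proper $C'$, then substitute the intersection numbers from Theorem \ref{thm:nonordintersec}, and finish the asymptotic claim by positivity of $(C\cdot Z_i)$. This is precisely the route the paper takes, so there is nothing further to add.
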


\section{Just-likely intersections on Hilbert Modular Surfaces}

\begin{definition}
Given two proper curves $C,D \subset \Hilb$ defined over $\mathbb F_q$ parametrizing Abelian surfaces $\abvar_C,\abvar_D$, we let
\[\I(C,D) = \{(x,y) \in \Hilb^2 : \abvar_{C,x} \text{ is isogenous to } \abvar_{D,y} \text{ through a power of } \pi_i^n\}.\]
Moreover, let 
\[\IF(C,D) = \{(x,y) \in \Hilb^2 : \abvar_{C,x} \text{ is isogenous to } \abvar_{D,y} \text{ through a } p\text{-power isogeny }\}.\]
Note that $\I(C,D) \subset \IF(C,D)$.
\end{definition}

We use the results of Section \ref{sec: local intersection bounds} to prove:

\begin{theorem}\label{thm: main not contained in axes}
Let $C,D$ be as above and suppose that $\abvar_C,\abvar_D$ parametrized respectively by $C,D$ have no extra endomorphisms generically, are generically ordinary and suppose moreover that $D$ is ample.

Then $\I(C,D)$ has infinitely many points not contained in a finite union of axes of the form $ \bigcup_i \{x_i\}\times D \cup \bigcup_j C \times \{y_j\}$.
\end{theorem}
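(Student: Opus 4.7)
The plan is to follow the Chai-Oort philosophy adapted to this setting: fix one of the partial Frobenius factors, say $\pi_1$, form the Hecke translates $C_n := (\pi_1^{n_0 n})^{-1}(C)$ (where $n_0$ is chosen so every non-ordinary point of $C$ is $\F_{p^{n_0}}$-rational), and study how the scheme-theoretic intersection $C_n \cdot D$ behaves as $n \to \infty$. Every closed point $y \in C_n \cap D$ produces a pair $(\pi_1^{n_0 n}(y), y) \in \I(C, D)$, since $\abvar_{C, \pi_1^{n_0 n}(y)}$ and $\abvar_{D, y}$ are linked through a power of $\pi_1$. The strategy is to show the total intersection grows while the obstructions --- local multiplicities and contributions from axes --- stay bounded.

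First I would establish the global lower bound $C_n \cdot D \to \infty$. Since $Z_1 \cup Z_2$ is ample on $\cH$, at least one of $(C \cdot Z_1)$, $(C \cdot Z_2)$ is strictly positive; after swapping $\pi_1$ with $\pi_2$ if necessary, I may assume $(C \cdot Z_2) > 0$, so Theorem~\ref{thm: change of heights formula} delivers
\[
\omega \cdot C_n \;=\; h_F(\abvar_{C_n}) \;\asymp\; q^n.
\]
Ampleness of $D$ places $D$ in the interior of the ample cone of $N^1(\cH)_{\mathbb R}$, so $MD - \omega$ is ample (in particular nef) for some fixed $M > 0$. Pairing with the effective divisor $C_n$ then gives $D \cdot C_n \geq \tfrac{1}{M}\, \omega \cdot C_n \to \infty$.

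Next I would bound local multiplicities uniformly in $n$. The hypothesis that $\abvar_C, \abvar_D$ have no extra endomorphisms generically, combined with generic ordinariness, lets me invoke Theorem~\ref{lem: transversality} to conclude that at every point $x \in \cH$ the local equations $f_{C,x}, g_{D,x}$ satisfy $t_1, t_2 \nmid f_{C,x}$ and $t_1, t_2 \nmid g_{D,x}$. Theorem~\ref{thm: local intersection, moving curve}(1) then gives $(C_n \cdot D)_x \leq K_x$ with $K_x$ independent of $n$. Consequently, for any finite set $\{y_j\} \subset D$, the sum $\sum_j (C_n \cdot D)_{y_j}$ stays bounded, and combining with the global growth shows that the number of distinct points of $C_n \cap D$ outside $\{y_j\}$ tends to infinity.

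Finally I would argue that the resulting pairs cannot all be absorbed by a finite union of axes $\bigcup_i \{x_i\} \times D \cup \bigcup_j C \times \{y_j\}$. A pair $(\pi_1^{n_0 n}(y), y)$ lies in some $C \times \{y_j\}$ only if $y = y_j$, which accounts for boundedly many $y$; it lies in some $\{x_i\} \times D$ only if $\pi_1^{n_0 n}(y) = x_i$, and since $\pi_1$ is a universal homeomorphism each fiber $(\pi_1^{n_0 n})^{-1}(x_i)$ is a single topological point, giving at most one such $y$ per $x_i$, uniformly in $n$. Together with the previous step, for $n$ large $C_n \cap D$ must produce a pair of $\I(C,D)$ off the given axes; adjoining this new pair to the forbidden data and iterating yields infinitely many such pairs. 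The principal obstacle is the global growth estimate of the second step, which requires fusing the Faltings-height formula under inseparable Hecke translation (Theorem~\ref{thm: change of heights formula}) with the convex geometry of the ample cone of $\cH$; the remaining steps are essentially formal consequences of the local-intersection and transversality machinery developed earlier in the paper.
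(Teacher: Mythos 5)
Your argument is correct and follows essentially the same route as the paper's: pull back $C$ under $\pi_1^{n_0 n}$, use the non-ordinary locus/Hodge bundle and ampleness of $D$ to force the global intersection numbers $C_n\cdot D$ to grow, use Theorem~\ref{lem: transversality} together with Theorem~\ref{thm: local intersection, moving curve}(1) to keep local multiplicities at a fixed finite set of points bounded, and then extract pairs of isogenous points escaping any finite union of axes. The only cosmetic differences are that you phrase the global growth in terms of $\omega\cdot C_n$ via Theorem~\ref{thm: change of heights formula} rather than $C_n\cdot Z$ directly (but $[Z]=(p-1)[\omega]$, so this is the same estimate, and the paper's Lemma~\ref{lem: global, arbitrary} does exactly the ``$mD-Z$ is ample'' trick you use with $\omega$), and that your final step excludes the axes by an iterative ``enlarge the forbidden data'' argument where the paper instead constructs two infinite sequences $x_i\in C$, $y_i\in D$ with all coordinates distinct, invoking finiteness of $\pi_1$-orbits to rule out repeated $y$'s. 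Both finishes rely on the same fact (universal homeomorphism, hence finite $\pi_1$-orbits) and land in the same place. One small caution, present in both your write-up and the paper's: the passage from ``global intersection tends to infinity while local multiplicities at a fixed finite set are bounded'' to ``the number of distinct intersection points grows'' requires quietly restricting $n$ to multiples of the relevant periods $r(x)$ and acknowledging that the bound $K_x$ depends on $x$; this is handled loosely in both versions and is worth making explicit.
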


\begin{remark}
If one of the curves generically does have extra endomorphisms, then the theorem is still true by the results of \cite{MST}.
\end{remark}

\begin{theorem}\label{thm: main density}
Suppose that $C,D \subset \Hilb$ are two curves such that $\I(C,D)$ has infinitely many points not contained in a finite union of the axes as in the previous theorem. Then, $\IF(C,D)$ is dense in $C\times D$.
\end{theorem}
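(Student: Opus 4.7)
The plan is to bootstrap Theorem~\ref{thm: main not contained in axes} by applying it to Hecke translates of the curve $D$, using the fact that membership in $\widetilde{\mathcal{I}}(C,D)$ is much less rigid than membership in $\mathcal{I}(C,D)$: any $p$-power isogeny is allowed, not just iterates of a single $\pi_i$. This extra flexibility lets us manufacture new points of $\widetilde{\mathcal{I}}(C,D)$ by composing the isogenies witnessing membership in $\mathcal{I}(C,D')$ for various curves $D'$ isogenous to $D$.

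Concretely, I first establish the following \emph{transport principle}. Let $T:\mathcal{H}\to\mathcal{H}$ be a finite $p$-power Hecke morphism (for instance, an iterate of $\pi_1$ or $\pi_2$), and set $D_T := T^{-1}(D)$. For any $(x,y')\in\mathcal{I}(C,D_T)$, the pair $(x, T(y'))$ belongs to $\widetilde{\mathcal{I}}(C,D)$: the isogeny $\mathcal{A}_x\sim\mathcal{A}_{y'}$ witnessing membership in $\mathcal{I}$ composes with the isogeny $\mathcal{A}_{y'}\sim\mathcal{A}_{T(y')}$ furnished by $T$ to produce a $p$-power isogeny. To apply Theorem~\ref{thm: main not contained in axes} to $(C,D_T)$, I verify that $D_T$ inherits the requisite hypotheses: it is generically ordinary (since $T$ preserves the ordinary locus), its generic fiber still has endomorphism ring exactly $\mathcal{O}_F$ (a generic $p$-power isogeny between $\mathcal{O}_F$-RM abelian surfaces introduces no new endomorphisms), and it is ample (the pullback of an ample divisor along a finite morphism is ample). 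Theorem~\ref{thm: main not contained in axes} then provides infinitely many non-axis points of $\mathcal{I}(C,D_T)$, which the transport principle pushes forward to points of $\widetilde{\mathcal{I}}(C,D)$.

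To conclude density, let $E\subseteq C\times D$ denote the Zariski closure of $\widetilde{\mathcal{I}}(C,D)$. If $E\neq C\times D$, then by the non-axis hypothesis applied to $(C,D)$ itself, $E$ must contain a $1$-dimensional irreducible component $E_0$ which is finite-to-one over both $C$ and $D$. Applying the construction above with $T=\pi_2^k$ for $k\to\infty$, the pushed-forward points of $\mathcal{I}(C,D_{\pi_2^k})$ all lie on $E$; their $y$-coordinates, however, become dense in $D$ as $k$ grows (the projections to $D_{\pi_2^k}$ are infinite, hence dense in the $1$-dimensional $D_{\pi_2^k}$, and $\pi_2^k: D_{\pi_2^k}\to D$ is a finite surjection), eventually placing more points of $E_0$ over a generic $y\in D$ than its generic fiber permits.

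The main obstacle I anticipate is ruling out the possibility that, even as $k$ varies, the pushed-forward pairs all conspire to lie on the same fixed curve $E_0$ without producing a contradiction in the generic fiber count. This requires a transversality analysis, in the spirit of Theorem~\ref{lem: transversality} and the local intersection bounds of Section~\ref{sec: local intersection bounds}, showing that the partial Frobenius translates $D_{\pi_2^k}$ intersect $C$ in generic position, so that the $x$-coordinates of the manufactured points genuinely spread out over $C$ rather than being forced onto the finite set of $x$-projections of $E_0$ above a given $y$. Once this transversality is in hand, the mismatch between growing point counts and bounded fiber cardinality of $E_0 \to D$ yields the desired contradiction.
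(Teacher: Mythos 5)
Your proposal takes a genuinely different route from the paper, and unfortunately both prongs of it have problems.

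The paper's argument is short and Galois-theoretic: pick non-axis points $(x_i,y_i)\in\I(C,D)$ with $y_i$ defined over $\F_{q^{m_i}}$ and $m_i\to\infty$; since $q$-power Frobenius is itself a $p$-power isogeny and $D,\IF(C,D)$ are $\F_q$-stable, every point $\Frob_q^j(y_i)$ ($0\le j<m_i$) is again $p$-power isogenous to $\abvar_{C,x_i}$ and lies on $D$, so all $m_i$ pairs $(x_i,\Frob_q^j(y_i))$ lie in $\overline{\IF(C,D)}$. This forces arbitrarily large fibers over the first projection, which is incompatible with $\overline{\IF(C,D)}$ being a proper $1$-dimensional (non-axis) closed subset of $C\times D$. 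The whole weight of the proof is borne by the observation that \emph{Frobenius conjugation acts on the $y$-coordinate alone} while preserving $p$-power isogeny to a fixed $x$.

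Your argument replaces this with re-applying Theorem~\ref{thm: main not contained in axes} to $(C,\pi_2^{-k}D)$ and pushing forward. Two issues. First, a scope problem: Theorem~\ref{thm: main density} as stated only assumes that $\I(C,D)$ has infinitely many non-axis points; it does \emph{not} assume the ``no extra endomorphisms'' hypothesis on $C$ and $D$ required by Theorem~\ref{thm: main not contained in axes}. Invoking that theorem for $(C, D_{\pi_2^k})$ silently imports hypotheses that are not granted, so you would be proving a narrower statement. (The paper's remark after Theorem~\ref{thm: main not contained in axes} indicates the authors deliberately want Theorem~\ref{thm: main density} to apply also to the extra-endomorphism case handled by \cite{MST}.) Second, the final counting step does not close. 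Knowing that the $y$-coordinates of the manufactured points become dense in $D$ gives no contradiction with a $1$-dimensional $E_0$ finite over $D$: such an $E_0$ already has a dense image in $D$ and a point above almost every $y$. To contradict finiteness of the fibers of $E_0\to D$ (or $E_0\to C$) you must produce \emph{unboundedly many} points of $\IF(C,D)$ over a \emph{single} $y$ (or single $x$), and your construction via $\pi_2^k$-translates of $D$ offers no mechanism for aligning the manufactured pairs over a common coordinate. You acknowledge this gap yourself (``ruling out the possibility that ... the pushed-forward pairs all conspire to lie on the same fixed curve $E_0$''), and the transversality analysis you gesture at would essentially need to reprove the content of the local intersection theory rather than just cite it. The paper's Frobenius-orbit trick is exactly the missing device that aligns all the new points over a fixed $x_i$; without it or a substitute, the argument does not yield a contradiction.
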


The strategy for the proof of Theorem \ref{thm: main not contained in axes} is as follows. We first prove that the local intersection numbers $d_n = ((\pi_1^{n})^{-1}(C). D)_x$ are bounded for any $x \in D$. Next, we prove that the global intersection numbers $((\pi_1^{n})^{-1}(C).D)$ are unbounded, by comparing these numbers with the quantities $((\pi_1^{n})^{-1}(C).Z)$, using Theorem \ref{thm:nonordintersec} and the ampleness of $D$. 
Finally, Theorem \ref{thm: main density} follows by a soft argument. Note that Theorem \ref{thm: main density} always holds for two curves which satisfy the conclusions of Theorem \ref{thm: main not contained in axes}. 

Throughout, we use the local coordinates $t_1,t_2$ of Lemma \ref{lem:productdeformation}. Let $C, D$ be as in Theorem \ref{thm: main not contained in axes}. We prove Theorem \ref{thm: main not contained in axes} through a sequence of lemmas. Recall that for a point $x \in X$, $r(x)$ is the smallest value so that $\pi_1^{r(x)}(x) = \pi_2^{r(x)}(x) = x$.

\begin{lemma}\label{lem: bdd local intersection number}
For any point $x \in C$, the intersection number of $(\pi_i^{n})^{-1}(C)$ with $D$ at $(\pi_i^{n})^{-1}(x) = x$ is bounded absolutely as $n$ ranges through the multiples of $r(x)$.
\end{lemma}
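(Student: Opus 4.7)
The plan is to apply Theorem \ref{thm: local intersection, moving curve} directly once we verify its hypotheses at $x$, using the no-extra-endomorphisms assumption together with Theorem \ref{lem: transversality} for both $C$ and $D$.

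First I would dispose of the trivial case: if $x \notin D$, then $x \notin (\pi_i^n)^{-1}(C) \cap D$ for all $n$ divisible by $r(x)$ (since $\pi_i^n(x) = x$ and $x$ lies on $(\pi_i^n)^{-1}(C)$ but not on $D$), so the local intersection number at $x$ is zero and there is nothing to prove. So assume $x \in C \cap D$, and work in the formal neighborhood $\widehat{\cO}_{\cH,x} \cong \overline{\mathbb F}_p[\![t_1,t_2]\!]$ with the product coordinates from Lemma \ref{lem:productdeformation}.

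Next, let $f, g \in \overline{\mathbb F}_p[\![t_1,t_2]\!]$ be the local defining equations at $x$ of (the components through $x$ of) $C$ and $D$ respectively. By hypothesis, both $\abvar_C$ and $\abvar_D$ have no extra endomorphisms generically, and both $C$ and $D$ are generically ordinary. Theorem \ref{lem: transversality} therefore applies to each curve at $x$: neither $t_1$ nor $t_2$ divides $f$, and neither $t_1$ nor $t_2$ divides $g$.

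With these divisibility conditions verified, I apply Theorem \ref{thm: local intersection, moving curve}(1) (for $i=1$) and its analog with the roles of $\pi_1, \pi_2$ and $t_1, t_2$ swapped (for $i=2$). In either case, the hypotheses $t_1, t_2 \nmid f$ and $t_1, t_2 \nmid g$ hold, so the theorem yields that $((\pi_i^n)^{-1}(C) \cdot D)_x$ is bounded independently of $n$ as $n$ ranges through multiples of $r(x)$. Since there is no real obstacle beyond checking hypotheses, the only subtlety worth flagging is that, to apply Theorem \ref{lem: transversality}, one should pass to the normalization of each irreducible component of $C$ and $D$ through $x$, sum the resulting local contributions, and take the maximum over the finitely many such components, which still yields an absolute bound as $n$ varies.
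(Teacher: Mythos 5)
Your proof is correct and follows essentially the same approach as the paper's: invoke Theorem \ref{lem: transversality} to rule out $t_i \mid f$ and $t_i \mid g$, then apply Theorem \ref{thm: local intersection, moving curve}(1). The extra care you take with the trivial case $x \notin D$ and with passing to normalizations of components is sound, though the paper leaves these points implicit (the latter is already handled inside the proof of Theorem \ref{thm: local intersection, moving curve}).
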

\begin{proof}
Around any point $x \in C,D$, the curves $C,D$ don't contain the divisors $t_i=0$ since otherwise they would either be generically non-ordinary or generically have extra endomorphisms (by Lemma \ref{lem: transversality}).

Therefore, part (1) of Theorem \ref{thm: local intersection, moving curve} applies and shows that the intersection number of $((\pi_i^{n})^{-1}(C)\cdot D)$ at $(\pi_1^{n})^{-1}(x)$ is absolutely bounded as $n = mr(x) \to \infty$.
\end{proof}

The next lemma deals with the global intersection number. We have that $(C.Z) = (C.Z_1) + (C.Z_2)$ is positive, so we assume without loss of generality that $(C.Z_2)$ is positive and define $C_n = (\pi_i^{n})^{-1}(C)$. 

\begin{lemma}\label{lem: global, Hasse}
The global intersection number $C_n\cdot Z \to \infty$ as $n \to \infty$ through the multiples of $r(x)$.
\end{lemma}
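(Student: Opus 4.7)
The plan is to derive this lemma as a nearly immediate consequence of Theorem~\ref{thm:nonordintersec}, once we set up the arithmetic so that its hypotheses apply.

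First, I would observe that since $C$ is a proper generically ordinary curve in $\cH$, the non-ordinary locus $C \cap Z$ is a finite set of closed points. Hence there exists some $n_0 \geq 1$ such that every non-ordinary point of $C$ is defined over $\F_{q}$, where $q = p^{n_0}$. This is exactly the hypothesis we need in order to invoke Theorem~\ref{thm:nonordintersec}, which gives the exact formula
\[
\big((\pi_1^{n_0 m})^{-1}(C)\cdot Z\big) \;=\; q^{m}\,(C\cdot Z_2) \,+\, (C\cdot Z_1)
\]
for every positive integer $m$.

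Next, I would pass to the subsequence of $n$'s which are simultaneously multiples of $r(x)$ and of $n_0$; for instance, $n = m \cdot \operatorname{lcm}(r(x), n_0)$ as $m \to \infty$. This subsequence is cofinal in the multiples of $r(x)$, and along it the above formula applies verbatim. By our standing assumption that $(C\cdot Z_2) > 0$ (which is the case we reduced to just before the lemma, using that $Z_1 \cup Z_2$ is ample so $(C \cdot Z) > 0$ and at least one of $(C\cdot Z_i)$ is positive), the right-hand side grows like $q^{m} \to \infty$. Thus $(C_n \cdot Z) \to \infty$ along multiples of $r(x)$, which is the desired conclusion.

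There is no substantive obstacle here; the real content has already been done in Section~\ref{sec:nonordinarylocus}. The only small points requiring care are (i) choosing $n_0$ uniformly for all non-ordinary points of $C$, which is possible because $C \cap Z$ is finite, and (ii) ensuring the subsequence of $n$'s we use is both divisible by $n_0$ (so the global formula applies) and divisible by $r(x)$ (so this estimate can later be combined with Lemma~\ref{lem: bdd local intersection number}). Both are handled by replacing $r(x)$ with $\operatorname{lcm}(r(x), n_0)$.
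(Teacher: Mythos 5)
Your proposal is correct and takes essentially the same route as the paper, which simply cites Theorem~\ref{thm:nonordintersec}; you have just filled in the (genuinely worth noting) bookkeeping of choosing $n_0$ so all non-ordinary points of $C$ are $\F_q$-rational and passing to multiples of $\operatorname{lcm}(r(x), n_0)$.
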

\begin{proof}
This follows immediately from Theorem \ref{thm:nonordintersec}.
\end{proof}


\begin{lemma}\label{lem: global, arbitrary}
Let $D$ now be an arbitrary ample, proper divisor. Then, the global intersection number $C_n\cdot D \to \infty$ as $n \to \infty$.
\end{lemma}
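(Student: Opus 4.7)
The plan is to compare intersections with the ample divisor $D$ to intersections with the non-ordinary divisor $Z = Z_1 \cup Z_2$, reducing the claim to Lemma \ref{lem: global, Hasse}. The key tool is the openness of the ample cone in the real Néron--Severi group.

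First, I would pass to a smooth projective surface $\overline{\cH}$ (a toroidal compactification of $\cH$) on which $D$ is ample as a divisor class and to which the effective divisor $Z$ extends. Since the ample cone is an open convex cone in $N^1(\overline{\cH})_{\RR}$ containing the class of $D$, there exists an integer $N \geq 1$ such that $ND - Z$ is again ample (this is just openness: perturbing $D$ by $-\frac{1}{N}Z$ keeps us in the ample cone for $N$ sufficiently large). In particular, $ND - Z$ is nef.

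Next, note that $\pi_i \colon \cH \to \cH$ is a finite morphism, so $C_n = (\pi_i^n)^{-1}(C)$ is a proper curve in $\cH$ and hence a proper curve in $\overline{\cH}$. Applying the nefness of $ND - Z$ to the proper curve $C_n$ yields
\[
(ND - Z) \cdot C_n \geq 0, \qquad \text{i.e.,} \qquad N \cdot (C_n \cdot D) \geq C_n \cdot Z.
\]
Combining this with Lemma \ref{lem: global, Hasse} (or equivalently with Theorem \ref{thm:nonordintersec}), which gives $C_n \cdot Z \to \infty$ along the relevant sequence of $n$'s, we conclude $C_n \cdot D \to \infty$.

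The main subtlety I expect to encounter is the compactification step: one must verify that $Z$ extends appropriately to $\overline{\cH}$ and that the comparison $N D \cdot C_n \geq Z \cdot C_n$ on $\overline{\cH}$ really reflects the intersection numbers on $\cH$ (the curves $C_n$ are proper in $\cH$, so no correction at the boundary is needed, but one should pin this down carefully). A clean alternative, bypassing compactification, is to use that $Z$ is linearly equivalent to $(p-1)\omega$ via the Hasse invariant section of $\omega^{p-1}$; then the ampleness of $D$ and the openness argument give $ND - (p-1)\omega$ nef for some $N$, and combining with Theorem \ref{thm: change of heights formula} yields $N(C_n \cdot D) \geq (p-1)\, h_F(\abvar_{C_n}) \asymp q^n$, which is even sharper than needed.
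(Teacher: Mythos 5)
Your proposal is correct and follows essentially the same route as the paper: both proofs choose $N$ (the paper's $m$) so that $ND - Z$ is ample, deduce $N(C_n \cdot D) \geq C_n \cdot Z$, and conclude via Lemma \ref{lem: global, Hasse}. Your added care about passing to a compactification (and phrasing the key step via nefness) is a reasonable tightening of the same argument, and your alternative via $Z \sim (p-1)\omega$ is just a repackaging of the identical comparison; neither changes the substance.
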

\begin{proof}
We will reduce to the case where our divisor is $Z$, the Hasse locus.

Since $D$ is ample, we can find a large enough $m$ such that $mD - Z$ is also ample. Therefore:
$$C_n.(mD - Z) \geq 0 \iff mC_n.D \geq C_n.Z$$
and since $C_n.Z \to \infty$ by the previous Lemma \ref{lem: global, Hasse}, $C_n.D$ is also unbounded. 
\end{proof}

We are now ready to prove the main results of this paper. 
\begin{proof}[Proof of Theorem \ref{thm: main not contained in axes}]
Let $S_n \subset C_n = (\pi_1^{n})^{-1}(C)$ be the set of points $x_{i,n}$ on $C_n$ isogenous to some point $y_{i,n}$ on $D$. Note that there is a unique $x_i \in C$ so that $\pi_1^n(x_{i,n}) = x_i$ since $\pi_1$ is a universal homeomorphism. We first show that $|S_n| \to \infty$ as $n \to \infty$. For contradiction, suppose that $|S_n| \leq N$. Then, for $r \gg 0$ and $n$ ranging through the multiples of the lcm of $r(x_1),\dots,r(x_N)$:
\[(C_{rn}\cdot D) = \sum_{i=1}^{N}(C_{rn}\cdot D)_{x_{i,n}} \leq NC_0\]
where the bound on the right hand side follows from Lemma \ref{lem: bdd local intersection number} and $C_0$ is some constant. On the other hand, the left hand side goes to infinity by Lemma \ref{lem: global, arbitrary} which provides us with our contradiction.

We have thus shown that there is an infinite set of points $x_1,x_2,\dots \subset C$ isogenous to some point on $D$. We would like to show that the corresponding points $y_1,y_2,\dots$ on $D$ also form an infinite set. Suppose otherwise for contradiction.

Thus, we can find an infinite subset $x_{i_1},x_{i_2},\dots$ isogenous to the same point $y$ on $D$ through the isogenies corresponding to $\pi_1^{n_1},\pi_2^{n_2},\dots$ with $n_i \to \infty$. That is:
\[\pi_1^{n_j}(y) = x_{i_j}.\]

On the other, the orbit of $\pi_1$ on any $\overline{\mathbb F}_q$ point of $\Hilb$ is finite since, in local coordinates, $\pi_1$ just corresponds to the Frobenius on one of the coordinates which certainly has a finite orbit. This forces us to identify some of the $x_{i_j}$ which is contradictory to our assumption that the $x_i$ are distinct.

We have thus shown that we can find two sequences $x_i,y_i \in C,D$ such that $x_i \neq x_j, y_i \neq y_j$ for $i\neq j$ and the $x_i$ are isogenous to the $y_i$ as required.
\end{proof}

We now prove Theorem \ref{thm: main density}, and therefore finish the proof of Theorem \ref{intromain}.

\begin{proof}[Proof of Theorem \ref{thm: main density}]
Suppose for contradiction that the closure of $\IF(C,D)$ is a proper subset of $C\times D \subset \Hilb^2$. Let $W$ be the complement of the axes inside the closure of $\I(C,D)$. By Theorem \ref{thm: main not contained in axes}, $W$ has positive dimension. We can therefore find an infinite sequence of points $(x_1,y_1),(x_2,y_2),\dots \in W$ such that the fields of definition of $y_i$ individually go to infinity.  We will prove that $N_i = \#\{y \in D : (x_i,y) \in W\} \to \infty$ as $i \to \infty$ which contradicts the fact that $W$ is a closed subset of $C\times D$ and therefore has finite degree projections onto the first factor.

Indeed, if $y_i$ has field of definition $\mathbb F_{q^{m_i}}$, then the size of the orbit of $y_i$ under $\Frob_{q}$ has size $m_i$ and moreover, each point in this orbit is $p$-power isogenous to $x_i$ and lies on $D$ (since $D$ is defined over $\mathbb F_q$). Since $W$ is defined over $\mathbb F_q$ too, it is fixed by any Frobenius and the entire orbit is contained inside $W$ proving that $N_i \geq m_i \to \infty$ as $i \to \infty$. 
\end{proof}



\bibliographystyle{alpha}

\bibliography{reference} 


\end{document}